\newtheorem{theorem}{Theorem}
\newtheorem{lemma}[theorem]{Lemma}
\newtheorem{claim}[theorem]{Claim}
\newtheorem{proposition}[theorem]{Proposition}
\theoremstyle{definition}
\newtheorem{definition}[theorem]{Definition}
\theoremstyle{remark}
\newcommand{\ignore}[1]{} 
\newcommand{\cB}{\ensuremath{\mathcal B}}
\newcommand{\cE}{\ensuremath{\mathcal E}}
\newcommand{\cH}{\ensuremath{\mathcal H}}
\newcommand{\cT}{\ensuremath{\mathcal T}}
\renewcommand{\phi}{\varphi}
\begin{document}

\title{On the threshold for the Maker-Breaker $H$-game}

\author{Rajko Nenadov$^1$, Angelika Steger$^1$}
\thanks{$^1$Institute of Theoretical Computer Science, ETH Z\"urich, 8092 Z\"urich, Switzerland, Email: \{rnenadov$|$steger\}@inf.ethz.ch}

\author{Milo\v s Stojakovi\'c$^2$}
\thanks{$^2$Department of Mathematics and Informatics, University of Novi Sad, Serbia. Email:
milos.stojakovic@dmi.uns.ac.rs. Partly supported by Ministry of Education and Science, Republic of Serbia, and Provincial Secretariat for Science, Province of
Vojvodina.}







\begin{abstract}
We study the Maker-Breaker $H$-game played on the edge set of the random graph $G_{n,p}$. In this game two players, Maker and Breaker, alternately claim unclaimed edges of $G_{n,p}$, until all the edges are claimed. Maker wins if he claims all the edges of a copy of a fixed graph $H$; Breaker wins otherwise.  In this paper we show that, with the exception of trees and triangles, the threshold for an $H$-game is given by the threshold of the corresponding Ramsey property of $G_{n,p}$ with respect to the graph~$H$.

\smallskip
\noindent \textbf{Keywords.} Positional games; random graphs; Maker-Breaker
\end{abstract}

\maketitle

\section{Introduction}

Combinatorial games are games like Tic-Tac-Toe or Chess in which each player has perfect information and players move sequentially. Outcomes of such games can thus, at least in principle, be predicted by enumerating all possible ways in which the game may evolve. But, of course, such complete enumerations usually exceed available computing powers, which keeps these games interesting to study.

In this paper we take a look at a special class of combinatorial games, the so-called Maker-Breaker positional games. Given a finite set $X$ and a family $\cE$ of subsets of $X$, two players, Maker and Breaker, alternate in claiming unclaimed elements of $X$ until all the elements are claimed. Unless explicitly stated otherwise, Maker starts the game. Maker wins if he claims all elements of a set from $\cE$, and Breaker wins otherwise. The set $X$ is referred to as the \emph{board}, and the elements of $\cE$ as the \emph{winning sets}.

Given a (large) graph $G$ and a (small) graph $H$, the \emph{$H$-game} on $G$ is played on the board $E(G)$ and the winning sets are the edge sets of all copies of $H$ appearing in $G$ as subgraphs. So, Maker and Breaker alternately claim unclaimed edges of the graph $G$ until all the edges are claimed. Maker wins if he claims all the edges of a copy of $H$, otherwise Breaker wins.

Positional games played on edges of random graphs were first introduced and studied in~\cite{SS05}. Here we look at the $H$-game played on the random graph $G_{n,p}$, where $H$ is a fixed graph. More precisely, we aim at determining a threshold function $p_0=p_0(n,H)$ such that
$$
\lim_{n\rightarrow \infty} \Pr[\Gnp \; \text{is Maker's win in the} \; H\text{-game}] = \begin{cases}
1, & p \gg p_0(n,H),\\
0, & p \ll p_0(n,H).
\end{cases}
$$
For the case that $H$ is a clique such thresholds were recently obtained by M\"uller and Stojakovi\'c~\cite{MS}.
There is an easy intuitive argument for the location of such a threshold:
if the random graph $G_{n,p}$ is so sparse that w.h.p.\ it only contains few scattered copies of $H$ then this should be a Breaker's win. If on the other hand the graph contains many copies of $H$ that heavily overlap then this should make Maker's task easier. As it turns out, the same intuition can also be applied to the threshold for the Ramsey property of $\Gnp$, thus one should expect that the two are related. We formalize this as follows. 

For graphs $G$ and $H$ we denote by
$ G \rightarrow (H)^e_2 $
the property that every edge-coloring of $G$ with $2$ colors contains a copy of $H$ with all edges having the same color.
For a graph $G=(V,E)$ on at least three vertices, we let $d_2(G) := (|E| - 1)/(|V|-2)$ and denote by $m_2(G)$ the so-called \emph{2-density}, defined as
$m_2(G) = \max_{{J \subseteq G, v_J \geq 3}} d_2(J)$.
If $m_2(G) = d_2(G)$, we say that $G$ is \emph{2-balanced}, and if in addition $m_2(G) > d_2(J)$ for every subgraph $J \subset G$ with $v_J \geq 3$, we say that $G$ is \emph{strictly 2-balanced}.

The Ramsey property of random graphs $G_{n,p}$ is well understood, as the following theorem shows, cf.\ also~\cite{NS13} for a short proof.

\begin{theorem}[R\"odl, Ruci\'nski \cite{RR93,RR94,RR95}] \label{thm:rr}
Let  \($H$\) be a graph that is not a forest of stars or paths of length 3. Then there exist constants $c, C > 0$ such that
        \begin{equation*}
          \lim_{n\to \infty}\Pr[\Gnp \rightarrow (H)^e_2] =
          \begin{cases}           
            1,&\text{if \(p \geq Cn^{-1/m_2(H)}\)}, \\
            0,&\text{if \(p \leq cn^{-1/m_2(H)}\)}.
          \end{cases}
        \end{equation*}
\end{theorem}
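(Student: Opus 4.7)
The theorem splits into the \emph{0-statement} ($p \le c n^{-1/m_2(H)}$) and the \emph{1-statement} ($p \ge C n^{-1/m_2(H)}$), which require different techniques. For the 0-statement, I would first reduce to a strictly $2$-balanced subgraph $H' \subseteq H$ with $m_2(H') = m_2(H)$, since a $2$-coloring with no monochromatic $H'$ avoids monochromatic $H$ as well. Standard first- and second-moment computations at $p = c n^{-1/m_2(H')}$ show that w.h.p.\ each edge of $G_{n,p}$ lies in only a bounded number of copies of $H'$; equivalently, the auxiliary graph whose vertices are copies of $H'$ in $G_{n,p}$ and whose edges connect overlapping copies has bounded maximum degree. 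A uniformly random $2$-coloring of $E(G_{n,p})$ combined with the Lov\'asz Local Lemma (one bad event per copy of $H'$, each with bounded local dependency) then yields a valid $2$-coloring.

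\textbf{The 1-statement.} For $p \ge C n^{-1/m_2(H)}$, I would follow the short container-based approach sketched in \cite{NS13}. Let $\cH$ denote the $e_H$-uniform hypergraph on vertex set $E(K_n)$ whose hyperedges are edge sets of copies of $H$ in $K_n$; independent sets of $\cH$ are precisely $H$-free subgraphs of $K_n$. Applying the hypergraph container theorem with parameter $q \asymp n^{-1/m_2(H)}$ yields a family $\cF$ of subsets of $E(K_n)$ such that every $H$-free subgraph is contained in some $F \in \cF$, $\log|\cF| = O(n^{2-1/m_2(H)} \log n)$, and every $F \in \cF$ contains at most $\varepsilon \binom{n}{v_H}$ copies of $H$. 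The key technical input is a co-degree bound for $\cH$: for every $\ell \ge 2$, any $\ell$ fixed edges of $K_n$ are jointly contained in at most $O\bigl(q^{\ell-1} \cdot n^{v_H - 2}\bigr)$ copies of $H$, which follows from the definition of $m_2(H)$ (applied to the subgraph spanned by those $\ell$ edges inside $H$).

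\textbf{Conclusion and main obstacle.} If $G_{n,p} \not\rightarrow (H)^e_2$, then $E(G_{n,p})$ decomposes into two $H$-free parts $R$ and $B$, each contained in some $F_R, F_B \in \cF$, so $E(G_{n,p}) \subseteq F_R \cup F_B$. Applying Janson's inequality to copies of $H$ inside the fixed set $F_R \cup F_B$, and using the upper bound $2\varepsilon \binom{n}{v_H}$ on the number of such copies, the probability of this containment is at most $\exp\bigl(-\Omega(n^{2-1/m_2(H)})\bigr)$, which beats the union bound over $\cF \times \cF$ once $C$ is taken large enough in terms of $\varepsilon$. The main obstacle is pinning down the co-degree condition with the exact exponent dictated by $m_2(H)$: it is precisely this matching of parameters that makes the container bound on $|\cF|$ and the Janson exponential tail combine to give the correct threshold. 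The exclusion of forests of stars and paths of length $3$ arises because for these graphs $m_2(H) \le 1$, a regime where the Ramsey threshold is governed by the appearance of $H$ itself rather than by a $2$-density argument, and a separate ad hoc analysis is needed.
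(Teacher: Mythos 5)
First, a caveat on the comparison: the paper does not prove Theorem~\ref{thm:rr} at all — it is quoted from R\"odl and Ruci\'nski \cite{RR93,RR94,RR95}, with \cite{NS13} cited for a short proof — so your proposal can only be measured against that reference and against the analogous arguments the paper does carry out (Theorem~\ref{thm:resil} for the 1-statement, Section~\ref{sec:main3} for the 0-statement analogue). Your 1-statement sketch is indeed the container route of \cite{NS13}, but as written the union bound does not close: the number of containers satisfies $\log|\cF| = \Theta(n^{2-1/m_2(H)}\log n)$, while Janson only gives $\exp(-\Omega(n^2p)) = \exp(-\Omega(C n^{2-1/m_2(H)}))$, and no constant $C$ beats the extra $\log n$ in the exponent. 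The fix — and the entire point of the fingerprint formulation in Theorem~\ref{thm:container} — is to weight each term of the union bound by the probability $p^{|T_i^+|}$ that the container's fingerprint is present in $G_{n,p}$ (an event independent of $G_{n,p}$ avoiding the complement of the container); the weighted sum is $2^{\varepsilon n^2 p}$ with $\varepsilon \to 0$ as $C \to \infty$, and only then does the exponential tail win. This is exactly how the proof of Theorem~\ref{thm:resil} is organized.

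The genuine gap is in your 0-statement. It is false that w.h.p.\ every edge of $G_{n,p}$ at $p = cn^{-1/m_2(H')}$ lies in a bounded number of copies of $H'$: the expected number of copies through a fixed edge is $\Theta(c^{e_{H'}-1})$, a constant, but the maximum over all $\binom{n}{2}$ edges is of order $\log n/\log\log n$, so your auxiliary overlap graph does not have bounded maximum degree. This is fatal for the Local Lemma as you set it up, because the bad-event probability $2^{1-e_{H'}}$ is a fixed constant, so the LLL requires a \emph{uniform} dependency-degree bound of roughly $2^{e_{H'}-1}/e$, not an average one; and even after excising atypical edges you are left with bounded-size but dense clusters of overlapping copies that a uniformly random coloring handles only with constant success probability each, with no independence to exploit inside a cluster. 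The known proofs of the 0-statement are structural rather than probabilistic: one shows (as in Lemma~\ref{lemma:probabilistic} here, or Lemma~6 of \cite{NS13}) that the copies of $H'$ organize into edge-disjoint closed clusters of bounded size, hence of density at most $m_2(H')$, and one then needs a separate \emph{deterministic} lemma that every graph $F$ with $m(F)\le m_2(H')$ admits a $2$-edge-coloring with no monochromatic $H'$ — the Ramsey analogue of Theorem~\ref{thm:deterministic}. That deterministic coloring lemma is where the real work lies, and where the excluded cases (forests of stars and of paths of length $3$) and the divergence between the Ramsey and the game thresholds for $K_3$ actually arise; it is entirely absent from your sketch.
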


Note that $p= n^{-1/m_2(F)}$ is the density where we expect that every edge is contained in roughly a constant number of copies of $H$. Thus, if $c$ is very small, the of copies of $H$ will be scattered. If on the other hand $C$ is big then these copies overlap so heavily that every coloring has to induce at least one monochromatic copy of $H$. 

In this paper, we show that this intuition indeed provides the correct answer for most graphs $H$.

\begin{theorem} \label{thm:main}
Let $H$ be a graph for which there exists $H' \subseteq H$ such that $d_2(H') = m_2(H)$, $H'$ is strictly $2$-balanced and it is not a tree or a triangle. Then there exist constants $c, C > 0$ such that
$$\lim_{n \rightarrow \infty} \Pr[\Gnp \; \text{is Maker's win in the} \; H\text{-game}] =
\begin{cases}
1, &p \geq Cn^{-1/m_2(H)}, \\
0, &p \leq cn^{-1/m_2(H)}.
\end{cases} $$
\end{theorem}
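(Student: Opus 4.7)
For the $0$-statement, set $p = cn^{-1/m_2(H)}$ with $c$ small; the plan is to exhibit a pairing strategy for Breaker. A standard first-moment calculation combined with the strict $2$-balancedness of $H'$ should show that w.h.p.\ the subgraph $G^\star \subseteq \Gnp$ spanned by copies of $H'$ decomposes into small ``clusters''---connected components of bounded size whose overlap structure is essentially a forest. Breaker will pre-commit, cluster by cluster, to a pairing $\pi$ on a subset of the edges of $G^\star$ such that every copy of $H'$ uses both edges of at least one pair and each edge lies in at most one pair; whenever Maker plays $e \in \mathrm{dom}(\pi)$, Breaker responds with $\pi(e)$, and otherwise plays arbitrarily. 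This prevents Maker from ever completing a copy of $H' \subseteq H$. The exclusion of trees and triangles enters precisely here: for trees the relevant threshold is not $n^{-1/m_2(H)}$ at all, while for the triangle no valid pairing exists (indeed the triangle game threshold is known to be $n^{-5/9}$, strictly above the Ramsey one).

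For the $1$-statement, set $p = Cn^{-1/m_2(H)}$ with $C$ large; the plan is to reduce the random game to a deterministic game on a small ``booster'' graph $B \supseteq H$ chosen so that (i) Maker has a deterministic winning strategy for the $H$-game on $B$, and (ii) $m_2(B) \le m_2(H)$, so that $\Gnp$ contains a copy of $B$ w.h.p.\ at this density. We will build $B$ by gluing several copies of $H'$ along carefully chosen common subgraphs; the strict $2$-balancedness of $H'$ is exactly the property needed to keep the $2$-density from rising under gluing, so that (ii) can be met, while the large number of copies of $H$ packed inside $B$ will be what enables (i). Given a copy of $B$ embedded in $\Gnp$, Maker fixes it, plays his winning strategy on its edges, and ignores Breaker's moves outside the copy (which are harmless since (i) is valid against \emph{any} Breaker play on $B$).

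The main obstacle is the construction of $B$: requirements (i) and (ii) are in tension, and meeting both rests crucially on the strictly $2$-balanced structure of $H'$. We expect to proceed by an explicit gluing modelled on the clique booster of M\"uller--Stojakovi\'c~\cite{MS}, verifying Maker's strategy on $B$ either by induction on the number of ``free'' copies of $H$ remaining or by an Erd\H os--Selfridge-type potential argument on the hypergraph of winning sets of $B$. A secondary but nontrivial difficulty is the $0$-statement pairing: the existence of $\pi$ must be checked with high probability over the random overlap structure of copies of $H'$, which requires a case analysis of small ``dangerous'' configurations and is clean precisely when $H'$ is not a tree or a triangle.
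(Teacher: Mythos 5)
Your $1$-statement plan cannot work, and the obstruction is proved in this very paper. For a fixed booster graph $B$ to appear in $\Gnp$ with probability tending to $1$ at $p=Cn^{-1/m_2(H)}$ you need $m(B)=\max_{B'\subseteq B}e_{B'}/v_{B'}$ to be at most $m_2(H)$ (in fact strictly smaller, since at equality the count of copies is asymptotically Poisson and the containment probability tends to a constant $<1$). But Theorem~\ref{thm:deterministic} shows that Breaker wins the $H'$-game --- and hence the $H$-game, since any Maker copy of $H$ contains a copy of $H'$ --- on \emph{every} graph $B$ with $m(B)\le m_2(H')=m_2(H)$, whenever $H'$ is strictly $2$-balanced with at least $4$ vertices (which the hypotheses guarantee). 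So no booster with properties (i) and (ii) exists; the tension you flag between (i) and (ii) is not merely a difficulty but an impossibility. This reflects the fact that the $1$-statement is a global phenomenon: the paper obtains it from the R\"odl--Ruci\'nski theorem (Theorem~\ref{thm:rr}) via strategy stealing --- if every two-colouring of $E(G)$ contains a monochromatic $H$, the first player can guarantee himself a copy --- and then strengthens it to a resilience statement (Theorem~\ref{thm:resil}) using hypergraph containers together with an Erd\H{o}s--Selfridge argument on an auxiliary hypergraph in which Maker plays the role of Breaker. (Note also that the $K_3$/$K_5^-$ example you cite has $m(K_5^-)=9/5<2=m_2(K_3)$ precisely because the triangle threshold lies \emph{below} the Ramsey threshold; for the graphs covered by this theorem no such sparse Maker-win graph can exist.)

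Your $0$-statement plan captures only the first, easier half of the argument. Pairing two ``private'' edges inside each copy of $H'$ that has at least two such edges is exactly the paper's preprocessing step, and the claim that what survives has bounded-size components is the content of Lemma~\ref{lemma:probabilistic} (proved by an exploration process plus first-moment bounds, not by a forest-like overlap structure --- at $p=cn^{-1/m_2(H)}$ with any fixed $c>0$ there remain, w.h.p., constant-size subgraphs of density up to $m_2(H)$ in which copies of $H'$ overlap densely). The genuine gap is what to do on these residual components, the minimal $H$-closed pieces of the $H$-core: there every copy of $H'$ has at most one private edge, so a pairing $\pi$ in your sense (each copy containing both edges of a dedicated pair, pairs disjoint) need not exist, and ``case analysis of small dangerous configurations'' will not close this for general $H$. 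The paper instead proves the deterministic criterion Theorem~\ref{thm:deterministic}: on any graph of density at most $m_2(H)$ Breaker wins, using Nash--Williams forest decompositions combined with Theorem~\ref{thm:enforce_tree} (Proposition~\ref{lemma:arb}) and bounded-outdegree orientations from Hall's theorem (Proposition~\ref{lemma:orient_str}), together with the density bound of Lemma~\ref{lemma:probabilistic2}. Some such density-based deterministic ingredient is indispensable and is missing from your outline; it is also exactly where the triangle genuinely fails (the counting bound behind Lemma~\ref{lemma:probabilistic} degenerates when $e_H=3$), not because ``no valid pairing exists.''
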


Next, we take a look at the graphs $H$ that are not covered by Theorem~\ref{thm:main}. For $H=K_3$ we have $m_2(K_3)=2$. Nevertheless, the threshold for the $K_3$-game is $n^{-5/9}$, cf.\ \cite{SS05}. The reason turns out to be that $K_5$ minus an edge is a Maker's win (which can be easily checked by hand) -- and this graph appears in $G_{n,p}$ w.h.p.\ whenever $p\gg n^{-5/9}$.

For graphs $H$ that contain a triangle, various things can happen. If their 2-density is above two, then they are covered by the above theorem. If $m_2(H)=2$ and $H$ contains a subgraph with 2-density exactly two that does not contain a triangle, then this case is also covered by the above theorem. Otherwise, the threshold can be placed almost arbitrarily between $n^{-5/9}$ and $n^{-1/2}$ while the 2-density of $H$ remains at 2, as our next theorem confirms. In particular, we show that there exists a class of graphs for which the threshold is not determined by the $2$-densest subgraph.

For a graph $H$, we denote by $H_P$ the graph obtained by adding  a path of length~$3$ between a vertex of  a $K_3$ and an arbitrary vertex of $H$, see Figure~\ref{fig:H_p}.

\begin{figure}[h!]
  \centering
  \tikzstyle{node}=[circle,fill=White,draw=Black,scale=0.5]
\tikzstyle{none}=[inner sep=0pt]
\tikzstyle{neutral}=[-]

\begin{tikzpicture}
	\begin{pgfonlayer}{nodelayer}
		\node [style=node] (0) at (-3.5, 2) {};
		\node [style=node] (1) at (-4.25, 1) {};
		\node [style=node] (2) at (-2.75, 1) {};
		\node [style=node] (3) at (-1.75, 1) {};
		\node [style=node] (4) at (-0.75, 1) {};
		\node [style=node] (5) at (0.25, 1) {};
		\node [style=none] (6) at (1.75, 2) {};
		\node [style=none] (7) at (1.25, 1.25) {H};
		\node [style=none] (8) at (1.75, 0.25) {};
	\end{pgfonlayer}
	\begin{pgfonlayer}{edgelayer}
		\draw [style=neutral] (0) to (2);
		\draw [style=neutral] (0) to (1);
		\draw [style=neutral] (1) to (2);
		\draw [style=neutral] (2) to (3);
		\draw [style=neutral] (3) to (4);
		\draw [style=neutral] (4) to (5);
		\draw [style=neutral, bend left=75, looseness=1.50] (5) to (6.center);
		\draw [style=neutral, bend left=90, looseness=1.75] (6.center) to (8.center);
		\draw [style=neutral, bend left=75, looseness=1.50] (8.center) to (5);
	\end{pgfonlayer}
\end{tikzpicture}
  \caption{Graph $H_P$} \label{fig:H_p}
\end{figure}
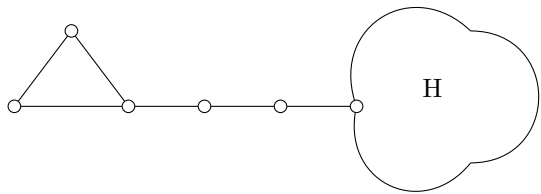

\begin{theorem}\label{thm:k3_extension}
Let $H$ be a graph which satisfies the conditions of Theorem~\ref{thm:main}. Then for $t = \min\{\frac 59,  1 / {m_2(H)}\}$ we have
$$
\lim_{n \rightarrow \infty} \Pr[\Gnp \; \text{is Maker's win in the} \; H_P\text{-game}] =
\begin{cases}
1, &p \gg n^{-t}, \\
0, &p \ll n^{-t}.
\end{cases}
$$
\end{theorem}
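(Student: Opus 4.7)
My plan is to handle both directions by reducing to the known thresholds of the $H$-game (Theorem~\ref{thm:main}) and the $K_3$-game (from~\cite{SS05}), using that $H_P$ contains both $H$ and $K_3$ as subgraphs.

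\textbf{Lower bound.} Suppose $p \ll n^{-t}$. Then either $p \ll n^{-5/9}$ or $p \ll n^{-1/m_2(H)}$. In the first case, Breaker plays the winning $K_3$-game strategy of~\cite{SS05}, and since every copy of $H_P$ contains a $K_3$, Maker cannot complete $H_P$. In the second, Breaker plays the winning $H$-game strategy from Theorem~\ref{thm:main}, and since $H \subseteq H_P$, again Maker cannot win.

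\textbf{Upper bound.} Suppose $p \gg n^{-t}$, so both $p \gg n^{-1/m_2(H)}$ and $p \gg n^{-5/9}$ hold. Maker's plan is to combine the $H$-game strategy of Theorem~\ref{thm:main} with the $K_3$-game strategy from~\cite{SS05} (which internally uses $K_5^-$ as an auxiliary gadget), linking them via a path of length~$3$. Concretely, I would locate in $G_{n,p}$ (with high probability) a substructure $F$ of the following form: (i) a subgraph $F_H$ on which Maker has a winning $H$-game strategy as in the proof of Theorem~\ref{thm:main}, together with (ii) at each vertex $v \in V(F_H)$, many vertex-disjoint pendant copies of a small gadget $P_3 + T$, where $T$ is a graph with $m(T) \leq 9/5$ on which Maker can build a triangle through the endpoint of the attached $P_3$. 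Existence of $F$ in $G_{n,p}$ w.h.p.\ follows from a standard first-moment plus concentration calculation, the two density hypotheses $p \gg n^{-1/m_2(H)}$ and $p \gg n^{-5/9}$ ensuring both parts appear in sufficient abundance. Maker then plays in two phases: in Phase~1 he executes the $F_H$ strategy in $m_1 = O(1)$ moves to secure a copy of $H$ on some vertex set $V_H$; in Phase~2 he picks any $v \in V_H$, identifies a pendant at $v$ untouched by Breaker (which must exist since the number of pendants per vertex exceeds $m_1$), and in $O(1)$ further moves claims the three path edges and a triangle through the attachment, producing a copy of $H_P$.

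\textbf{Main obstacle.} The technical heart is the construction of the gadget $T$: the natural choice $T = K_5^-$ has the right density ($m(K_5^-) = 9/5$) but by itself fails the ``triangle through a prescribed vertex'' Maker--Breaker subgame, for which Breaker has a simple pairing-style winning strategy. One must therefore enlarge $T$, for example by duplicating $K_5^-$ across a sufficiently long path so as to preserve $m(T) \leq 9/5$ while giving Maker multiple simultaneous triangle threats at the prescribed vertex. Once such $T$ is in hand, the existence of the full structure $F$ in $G_{n,p}$ and the counting of untouched pendants are routine, and the upper bound follows.
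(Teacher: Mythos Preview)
Your lower bound is correct and matches the paper's argument exactly.

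Your upper bound, however, has a genuine gap. The whole plan hinges on winning the $H$-game in $m_1 = O(1)$ moves on some constant-size subgraph $F_H \subseteq G_{n,p}$. Theorem~\ref{thm:main} does \emph{not} provide such an $F_H$: its $1$-statement (proved via containers, i.e.\ Theorem~\ref{thm:resil}) only asserts that Maker wins on the whole of $G_{n,p}$, and the strategy may well use $\Theta(n^2p)$ edges. To make your pendant-counting work you would need a constant-size graph $F_H$ with $m(F_H)\le m_2(H)$ that is a Maker's win for the $H$-game; this is a separate (and nontrivial) statement that you have not established. Without it, during Phase~1 Breaker may touch every pendant at every vertex of $F_H$, and Phase~2 collapses. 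Your proposed gadget enlargement for $T$ is also vague (gluing copies of $K_5^-$ along a path does not obviously yield a triangle through a \emph{prescribed} vertex), but this is secondary to the $F_H$ issue.

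The paper sidesteps the problem by reversing the order of the two subgames. Maker first plays the $K_3$-game on a single copy of $K_5^-$ (this \emph{is} four moves), then spends three ``fan-out'' phases claiming edges so that the third neighbourhood $N_3$ of a triangle vertex in Maker's graph has size $n/6$; all of this costs only $o(n^2p)$ moves. Finally Maker plays the $H$-game inside $G[N_3]$, invoking the resilience Theorem~\ref{thm:resil}: since Breaker has claimed at most $o(n^2p)$ edges so far, Maker can still build $H$ in $G[N_3]\setminus R$. The point is that the resilience theorem is precisely what lets Maker absorb the ``wasted'' rounds of the earlier phases, and no constant-size $H$-winning board is ever needed.
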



Our paper is structured as follows. In the next section we collect some preliminaries. Then, in Sections~\ref{sec:main1}-\ref{sec:main3} we prove Theorem~\ref{thm:main}, while in Section~\ref{sec:add} we prove Theorem~\ref{thm:k3_extension}.

\section{Preliminaries}

In this section we collect some known properties about positional games, graph decompositions and random graphs. We follow the standard notation. In particular, for a graph $G$ and a subset $A \subseteq V(G)$, we denote with $N_G(A)$ the neighborhood of $A$ in $V(G) \setminus A$, i.e.
$$ N_G(A) := \left\{ v \in V(G) \setminus A \mid \exists a \in A \; \text{such that} \; \{v,a\} \in E(G) \right\}. $$
If the graph $G$ is clear from the context, we omit it in the subscript. Furthermore, for a graph $G$ we use $v_G$ and $e_G$ to denote the number of vertices and edges of $G$, respectively.

\subsection{Positional games}

For a Maker-Breaker game with the board $X$ and the winning sets $\cE$, the hypergraph $(X, \cE)$ is referred to as the \emph{hypergraph of the game}.
The following is a classical result in the theory of positional games.

\begin{theorem}[Erd\H os-Selfridge criterion \cite{ES73}]
\label{thm:erd_self}
Let $(X, \cE)$ be a hypergraph. Then, if Breaker has the first move in the game,
\begin{equation} \label{eq:erd_self}
\sum_{A \in \cE} 2^{-|A|} < 1
\end{equation}
is a sufficient condition for Breaker's win in the game $(X, \cE)$.
\end{theorem}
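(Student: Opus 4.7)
The plan is a standard potential function argument, designed so that Breaker can keep a single global weight strictly below $1$ throughout the game. I would associate to every winning set $A \in \cE$ at every moment a weight: set $w(A) := 0$ if Breaker has already claimed an element of $A$, and $w(A) := 2^{-(|A| - |A \cap M|)}$ otherwise, where $M$ is Maker's currently claimed set. Call $A$ \emph{live} in the second case. The aggregate potential $W := \sum_{A \in \cE} w(A)$ starts at $\sum_{A \in \cE} 2^{-|A|} < 1$, and if Maker ever completes some winning set $A^*$, then at that moment $w(A^*) = 2^0 = 1$, so $W \geq 1$. Thus it suffices to describe a strategy for Breaker that keeps $W < 1$ at all times.

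The strategy I would take is the greedy one: on each of his turns, Breaker plays the unclaimed element $y$ that maximizes $\sum_{A \ni y, \, A \text{ live}} w(A)$. To bound the change across one round (Breaker plays $y$, then Maker plays $x$), I would decompose: Breaker's move decreases $W$ by $\sum_{A \ni y, \, A \text{ live}} w(A)$ (these sets die), while Maker's move increases $W$ by $\sum_{A \ni x, \, A \text{ live after Breaker}} w(A)$ (each such weight doubles, so grows by its old value). The sets through $x$ that are live after Breaker's move are a subset of those live through $x$ before it, with unchanged weights, and by the greedy choice of $y$ the sum over $y$ dominates the sum over any other unclaimed element, in particular over $x$. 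Hence the net change across a round is non-positive, and $W \leq W_0 < 1$ holds forever, contradicting the Maker-win condition $W \geq 1$.

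The step I would expect to require the most care is this monotonicity inequality. The subtle point is that $w(A)$ depends on Maker's current set $M$ and therefore changes during a round; the key observation is that for each set $A$ which survives Breaker's move, $w(A)$ is the same immediately before Breaker's move as immediately before Maker's response, so Breaker's optimization over $y$ can be compared directly against Maker's (worst case) choice of $x$. Once this is nailed down, the rest is bookkeeping. Finally, the hypothesis ``Breaker moves first'' is exactly what makes this work: if Maker were to move first, one would begin the game with a doubling step that could push $W$ above $1$ before Breaker ever acts, which is why this assumption is essential.
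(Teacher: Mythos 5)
Your proof is correct and is essentially the paper's own argument: the greedy choice maximizing the weight through the claimed element, the observation that Maker's response can only double weights that Breaker's choice already dominated, and the induction on rounds are exactly the inductive sketch given after the theorem statement, merely rephrased via an explicit potential function. No gaps; the handling of the ``Breaker moves first'' hypothesis is also the same.
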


To see why this condition is sufficient, consider the following strategy for Breaker: choose $x\in X$ such that $\sum_{A \in \cE; x \in A} 2^{-|A|}$ is maximal, and denote with $\cE'$ the set of hyperedges which does not contain $x$. Then Maker's move will result in a vertex $y\in X$ such that
$\sum_{A \in \cE'; y \in A} 2^{-|A|}\le \sum_{A \in \cE; x \in A} 2^{-|A|}$. Observe that all edges $A\in \cE$ with $x\in A$ essentially disappear from the game, while the size of all edges $A\in \cE'$ with $y\in A$ just shrink by one. The choice of $x$ thus implies that the condition of the theorem remains valid and the theorem thus follows by induction.

The following result guarantees that the first player cannot claim a cycle in the game played on the union of two disjoint forests.

\begin{theorem}[\cite{HKS07}] \label{thm:enforce_tree}
Let $F_1 = (V, E_1)$ and $F_2 = (V, E_2)$ be two edge disjoint forests on the same vertex set $V$. Then if two players alternately claim unclaimed edges from $E_1\cup E_2$, the second player can enforce that the edges of the first player span a forest.
\end{theorem}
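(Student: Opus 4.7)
My plan is to design a pairing strategy for the second player. I will partition a subset of the edges of $F_1 \cup F_2$ into disjoint pairs so that every cycle of $F_1 \cup F_2$ contains both edges of at least one pair. Given such a pairing, the second player responds to each move of the first player by claiming the partner of the edge just claimed, whenever that partner is still unclaimed, and plays arbitrarily otherwise. The standard pairing-strategy argument then guarantees that the first player never owns both edges of any pair, and by the covering property he cannot complete any cycle, so his edges span a forest throughout the game.

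To build the pairing I would root each tree of $F_1$ and each tree of $F_2$ and orient every edge toward its root. Each non-root vertex $v$ in $F_i$ then has a unique outgoing \emph{parent edge} $p_i(v) \in F_i$. At every vertex $v$ that is a non-root in both $F_1$ and $F_2$ I pair $\{p_1(v), p_2(v)\}$. These two edges are distinct by edge-disjointness of $F_1$ and $F_2$, and no edge appears in two pairs because $p_i$ is injective on the non-roots of $F_i$, so this is a legitimate matching on a subset of $E(F_1 \cup F_2)$.

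The heart of the proof is to choose the roots so that every cycle $C$ of $F_1 \cup F_2$ contains one of these pairs. Any such $C$ decomposes into an alternating concatenation of maximal paths in $F_1$ and maximal paths in $F_2$, joined at \emph{junction} vertices; a junction $v$ carries a pair of $C$ exactly when both $C$-edges incident to $v$ are outgoing at $v$ in their respective forests. A naive rooting can easily fail here: the $6$-cycle realised as the union of two disjoint perfect matchings admits rootings in which every vertex of the cycle is a root in exactly one forest and hence carries no pair. The main obstacle is therefore to coordinate the rootings of $F_1$ and $F_2$ across each connected component of $F_1 \cup F_2$; one natural recipe is to pick a distinguished base vertex in each such component and take it as a root in both forests, then propagate the parent structure outward consistently. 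A short parity argument along $C$, comparing the peaks of each maximal $F_i$-sub-path with the junction structure, then locates a junction where both incident $C$-edges point away from the junction --- yielding the required pair, and thus the strategy.
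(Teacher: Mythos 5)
Your argument hinges on the existence of a pairing of (some of) the edges of $F_1\cup F_2$ into disjoint pairs such that every cycle contains both edges of at least one pair, and this is exactly where the proof breaks down: such a pairing need not exist, for any choice of roots and in fact for any pairing whatsoever. Take $V=\{1,2,3,4\}$ and let $F_1$ and $F_2$ be the two edge-disjoint Hamiltonian paths $1\mbox{--}2\mbox{--}3\mbox{--}4$ and $3\mbox{--}1\mbox{--}4\mbox{--}2$, so that $F_1\cup F_2=K_4$. Any pairing partitions a subset of the six edges into at most three pairs; a pair of adjacent edges is contained in exactly one of the four triangles of $K_4$, and a pair of disjoint edges is contained in none, so at most three of the four triangles contain a pair. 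Some triangle $T$ is therefore uncovered, and the first player simply claims its three edges: each response dictated by the pairing lies outside $T$ (otherwise $T$ would contain a pair), so the first player completes the cycle $T$. Your specific parent-edge pairing fails concretely here as well: rooting both paths at the common base vertex $1$ produces the pairs $\{12,24\}$, $\{23,13\}$ and $\{34,14\}$, and the triangle $\{23,34,24\}$ contains none of them. So the obstacle is not merely the coordination of the two rootings, which is the only difficulty you flag; the theorem is not provable by a pairing strategy at all, and no parity argument along the cycle can rescue the plan.

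A correct proof must therefore be adaptive: the second player's reply has to depend on the evolving component structure of the first player's graph rather than on a matching fixed in advance. This is how the cited proof in~\cite{HKS07} proceeds --- the second player maintains an invariant relating the components of the first player's graph to the unclaimed edges of the board, and the hypothesis that the board is a union of two forests enters through the bound $e(S)\le 2(|S|-1)$ for every $S\subseteq V$ (equivalently, through Nash-Williams/Lehman-type arguments for the graphic matroid), which guarantees that the second player can always restore the invariant. If you want to complete your write-up you should abandon the static pairing and work with such an invariant.
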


Finally, the following result determines the threshold for the $K_3$-game.

\begin{theorem}[\cite{SS05}] \label{thm:K_3}
Consider the $K_3$-game (i.e.\ the \emph{triangle} game) played on the edge set of $\Gnp$. Then
$$\lim_{n \rightarrow \infty} \Pr[\Gnp \; \text{is Maker's win in the} \; K_3\text{-game}] =
\begin{cases}
1, &p \gg n^{-5/9}, \\
0, &p \ll n^{-5/9}.
\end{cases} $$
\end{theorem}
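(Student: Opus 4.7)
The plan is to prove the two halves separately; the $1$-statement (Maker wins for $p \gg n^{-5/9}$) is the easier half, while the $0$-statement (Breaker wins for $p \ll n^{-5/9}$) is where the real work lies.

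For the upper bound, I would rely on the combinatorial fact, already hinted at in the excerpt, that $K_5^-$ (the graph $K_5$ with one edge deleted) is a Maker's win in the $K_3$-game. This is a finite claim which I would verify by an explicit case analysis: Maker opens with an edge of the unique $K_4$ inside $K_5^-$, and a short strategy stealing / double-threat argument shows that Maker can always force a monochromatic triangle. Since $K_5^-$ has $9$ edges on $5$ vertices and one checks easily that every proper subgraph $J$ satisfies $e_J/v_J < 9/5$, the standard second-moment argument gives $K_5^- \subseteq G_{n,p}$ w.h.p.\ whenever $p \gg n^{-5/9}$. By the usual monotonicity of Maker-Breaker games (Maker restricts his attention to a fixed copy of $K_5^-$ and treats any Breaker move outside as a ``pass,'' which only helps him), this yields the $1$-statement.

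For the lower bound, a direct application of Theorem~\ref{thm:erd_self} is hopeless: $G_{n,p}$ has $\Theta(n^3p^3) = \omega(1)$ triangles, so the Erd\H{o}s-Selfridge sum $\sum 2^{-3}$ blows up. Instead I would perform a structural analysis of the triangles in $G_{n,p}$ and then design a component-wise Breaker strategy. The structural step has two parts. First, using a first-moment calculation, I would show that for $p \ll n^{-5/9}$ the random graph contains w.h.p.\ no subgraph $F$ that is a Maker's win in the $K_3$-game, since every such $F$ must contain $K_5^-$ or another ``obstruction'' with $e/v \geq 9/5$, all of which are sub-threshold. Second, I would show that the ``triangle hypergraph'' of $G_{n,p}$ — triangles as hyperedges, linked when they share an edge of $G_{n,p}$ — decomposes w.h.p.\ into small components: isolated triangles, $K_4^-$-books of two triangles sharing an edge, and a small number of similarly light configurations, none of which is a Maker's win.

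Given this structural picture, Breaker's strategy is assembled locally. On every isolated triangle, Breaker uses a trivial pairing strategy (pair two of the three edges and always respond in-pair). On each non-isolated component, because the component is not a Maker-winning graph by the previous step, Breaker has a fixed winning strategy on that small board; these strategies compose via the standard ``play in the same component Maker just played in, otherwise play arbitrarily'' framework, since the components are edge-disjoint. Edges of $G_{n,p}$ that lie in no triangle are irrelevant to the $K_3$-game and can simply be ignored by Breaker. The main obstacle is the structural classification: one must enumerate, up to w.h.p.\ non-appearance, all possible triangle-overlap configurations in $G_{n,p}$ at density $p \ll n^{-5/9}$ and verify case-by-case that each is a Breaker's win — a finite but genuinely non-trivial case analysis, supported by careful first-moment estimates that rule out every graph of $2$-density exceeding that of $K_5^-$.
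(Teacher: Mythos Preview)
This theorem is not proved in the paper; it is quoted in the preliminaries as a result of~\cite{SS05} and used only as a black box (most explicitly in Phase~1 of the proof of Theorem~\ref{thm:k3_extension}, where the paper invokes the fact that $K_5^-$ is a Maker's win in the triangle game). There is therefore no proof in the paper to compare your proposal against.

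For what it is worth, your outline is along the correct lines and is consistent with the remarks the paper makes in passing. The $1$-statement really does go through via the appearance of $K_5^-$ for $p\gg n^{-5/9}$, exactly as you describe. For the $0$-statement, the architecture you propose---decompose the triangle hypergraph of $G_{n,p}$ into edge-disjoint components, show each component that can occur for $p\ll n^{-5/9}$ is a Breaker's win, and compose the local strategies---is the right one. The one place to be careful is your sentence ``every such $F$ must contain $K_5^-$ or another obstruction with $e/v\ge 9/5$'': this is the \emph{conclusion} of the case analysis, not an input to it. You cannot simply assert a priori that any Maker-winning board has density at least $9/5$; you have to enumerate the finitely many triangle-cluster types that survive the first-moment bound and check each one by hand, which is exactly the ``finite but genuinely non-trivial case analysis'' you acknowledge at the end.
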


\subsection{Graph decompositions}

\begin{theorem}[Nash-Williams' arboricity theorem \cite{NW}] \label{thm:arb}
Any graph $G$ can be decomposed into $\lceil ar(G) \rceil$ edge-disjoint forests, where
$$ ar(G) = \max_{G' \subseteq G} \frac{e(G')}{v(G') - 1}. $$
\end{theorem}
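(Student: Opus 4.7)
The plan is to reproduce the classical exchange-style proof of Nash--Williams. The cleanest modern route is via the matroid union theorem applied to the graphic matroid, but the underlying augmenting-path idea is already self-contained, so I will sketch that.

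First I would dispatch the easy direction: if $E(G) = F_1 \sqcup \cdots \sqcup F_t$ is a decomposition into forests, then for every subgraph $G' \subseteq G$ with $v(G') \geq 2$ each restriction $F_i \cap E(G')$ is a forest on $V(G')$, hence has at most $v(G') - 1$ edges. Summing yields $e(G') \leq t(v(G') - 1)$ and hence $t \geq \lceil ar(G) \rceil$. Setting $k := \lceil ar(G) \rceil$, the remaining task is to construct an actual decomposition, using the hypothesis that $e(G') \leq k(v(G') - 1)$ for every $G' \subseteq G$.

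For the constructive direction, I would consider an arbitrary partition $E(G) = E_1 \sqcup \cdots \sqcup E_k$ together with a maximum spanning forest $F_i \subseteq E_i$ in each part, and pick one maximizing $\Phi := \sum_i |F_i|$. If $F_i = E_i$ for every $i$ we are done. Otherwise fix $e_0 \in E_{i_0} \setminus F_{i_0}$; this edge closes a unique cycle in $F_{i_0} \cup \{e_0\}$. I would then grow a set $S$ of reachable edges as follows: start with $e_0$, and whenever $e \in S$ lies in $E_j \setminus F_j$ add every edge of the unique $F_j$-cycle that $e$ closes. Call an edge $f \in S$ \emph{free} if there is some index $j'$ different from the part of $f$ for which $F_{j'} \cup \{f\}$ is still acyclic. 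If some reachable $f$ is free, a short augmenting sequence of swaps along the reachability arcs (move $f$ into part $j'$, promote the edge that reached $f$ into the forest it vacated, and propagate backwards to $e_0$) strictly increases $\Phi$, contradicting maximality.

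The hard part will be ruling out the ``no free edge is reachable'' case. The plan here is to argue that if every reachable edge is blocked in every other part, then $S$ lies in the induced subgraph $G'$ on $V(S)$, and every $F_i \cap E(G')$ is already a spanning forest of the relevant portion of $V(S)$ (otherwise some reachable edge would supply a free target). Careful bookkeeping of $\sum_i |F_i \cap E(G')|$ against $|S|$ and $v(G')$ should then yield $e(G') > k(v(G') - 1)$, contradicting $k \geq ar(G)$. This density-based contradiction is the only delicate step; everything else is routine matroidal bookkeeping.
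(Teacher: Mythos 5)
The paper offers no proof of this statement: it is quoted verbatim as Nash--Williams' classical arboricity theorem with a citation to \cite{NW}, so there is no in-paper argument to measure yours against. Judged on its own terms, your sketch follows the standard route (matroid union for the graphic matroid, unfolded into an explicit augmenting-exchange argument), and it is sound. The lower-bound direction is complete and correct: restricting each forest to a subgraph $G'$ gives at most $v(G')-1$ edges per forest, hence $t \geq \lceil ar(G)\rceil$. The upper-bound direction is the one the paper actually uses, and your plan for it is the right one; the augmentation step (propagating a free reachable edge back to $e_0$ to increase $\Phi$) is standard and works. The only real content you leave open is the one you flag yourself: when no reachable edge is free, you must produce a vertex set $U$ containing both ends of $e_0$ such that \emph{each} $F_i[U]$ is connected, i.e.\ a spanning tree of $U$ --- connectivity, not merely being a spanning forest, is what forces $\sum_i |F_i \cap E(G[U])| = k(|U|-1)$ exactly; together with the uncovered edge $e_0$ this gives $e(G[U]) \geq k(|U|-1)+1$ and the desired contradiction with $ar(G) \leq k$. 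Your parenthetical ``otherwise some reachable edge would supply a free target'' is precisely the mechanism that proves this connectivity, but making it rigorous (defining $U$ via the exchange-reachability closure and checking that an edge joining two components of some $F_j[U]$ would indeed be free for part $j$) is the technical heart of the theorem and would need to be written out in full. With that lemma supplied, the proof is the classical one and is correct.
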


The next lemma follows immediately from Hall's theorem. For convenience of the reader we add its short proof.

\begin{lemma} \label{lemma:orient}
The edges of any graph $G$ can be oriented such that the maximal outdegree is at most $\lceil m(G) \rceil$, where
$$ m(G) = \max_{G' \subseteq G} \frac{e(G')}{v(G')}. $$
\end{lemma}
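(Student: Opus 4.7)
The plan is to set up a bipartite graph and apply Hall's marriage theorem. Write $k = \lceil m(G) \rceil$. Construct a bipartite graph $B$ whose left side is $E(G)$ and whose right side contains, for each vertex $v \in V(G)$, exactly $k$ copies $v^{(1)}, \ldots, v^{(k)}$. Connect each edge $e = \{u,v\} \in E(G)$ (on the left) to all $2k$ copies $u^{(i)}, v^{(i)}$ corresponding to its endpoints.

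A matching in $B$ that saturates the left side yields the desired orientation: if $e = \{u,v\}$ is matched to a copy of $u$, orient $e$ from $u$ to $v$. Since only $k$ copies of each vertex are available, no vertex receives more than $k$ assigned edges, so the maximum outdegree is at most $k$.

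It remains to verify Hall's condition for the left side of $B$. For a set of edges $S \subseteq E(G)$, let $V(S)$ denote the set of endpoints of edges in $S$; then $N_B(S)$ consists of exactly the $k$ copies of each vertex in $V(S)$, so $|N_B(S)| = k \cdot |V(S)|$. Considering the subgraph $G' = (V(S), S)$, the definition of $m(G)$ gives
$$ \frac{|S|}{|V(S)|} = \frac{e(G')}{v(G')} \le m(G) \le k, $$
hence $|S| \le k \cdot |V(S)| = |N_B(S)|$, and Hall's condition holds.

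The only nontrivial step is arriving at the right bipartite reformulation; after that, Hall's condition reduces immediately to the definition of $m(G)$ and there is no real obstacle.
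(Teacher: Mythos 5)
Your proof is correct and follows essentially the same route as the paper: the same bipartite auxiliary graph with $k=\lceil m(G)\rceil$ copies of each vertex, the same application of Hall's theorem, and the same translation of a saturating matching into an orientation. Your explicit verification of Hall's condition via the subgraph $G'=(V(S),S)$ is exactly the step the paper declares to follow ``immediately'' from the definition of $m(G)$.
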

\begin{proof}
Let $k:=\lceil m(G) \rceil$. We construct a bipartite graph $\hat G$ as follows. One vertex class consists of all edges of $G$ (class $P_e$) and the other of $k$ copies of each vertex of $G$ (class $P_v$). Furthermore, we add an edge between edge $e$ and a vertex $v$ if and only if $v$ is an endpoint of $e$ in $G$. It follows immediately from the definition of $m(G)$ and the construction of $\hat G$ that $\hat G$ satisfies Hall's condition with respect to the class $P_e$. Thus, $\hat G$ contains a matching $M$ that covers the set $P_e$. Orient an edge $e = \{v, u\}$ of $G$ towards $u$ if $\{e, v\}$ belongs to $M$ (for some copy of $v$ in $P_v$). Since each vertex appears only $k$ times in $P_v$, we deduce from the construction that the out-degree of each vertex is bounded by $k$. Since $M$ covers $P_e$, this process describes the orientation of every edge.
\end{proof}

\subsection{Hypergraph containers}
For the proof of the 1-statement of Theorem~\ref{thm:main}, we need the following consequence of the container theorems of Balogh, Morris, and Samotij~\cite{BMS12} and Saxton and Thomason~\cite{ST12}. The following theorem for all graphs $H$ is from~\cite{ST12}. A similar statement is obtained in~\cite{BMS12} for all $2$-balanced graphs~$H$.

\begin{definition}
For a given set $S$, let $\cT_{k, s}(S)$ be the family of $k$-tuples of subsets defined as follows,
$$ \cT_{k, s}(S) := \left\{ (S_1, \ldots, S_k) \,\Big| \,\,  S_i \subseteq S \; \text{for} \; 1 \leq i \leq k \; \text{and} \; \Big|\bigcup_{i=1}^k S_i \Big| \leq s\right\}. $$
\end{definition}

\begin{theorem}[\cite{ST12}, Theorem $1.3$] \label{thm:container}
For any graph $H$ there exist constants $n_0, s \in \mathbb{N}$ and $\delta < 1$ such that the following is true. For every $n\ge n_0$  there exists $t=t(n)$, pairwise distinct tuples $T_1,\ldots,T_{t} \in \cT_{s, sn^{2 - 1/m_2(H)}}(E(K_n))$ and sets
$C_1,\ldots,C_{t} \subseteq E(K_n)$, such that
\begin{enumerate}
\item[(a)] each $C_i$ contains at most $(1 - \delta)\binom{n}{2}$ edges,
\item[(b)] for every $H$-free graph $G$ on $n$ vertices there exists $1\le i\le t$ such that $T_i\subseteq E(G) \subseteq C_i$. (Here $T_i\subseteq E(G)$ means that all sets contained in $T_i$ are subsets of $E(G)$.)
\end{enumerate}
\end{theorem}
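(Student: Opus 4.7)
The plan is to deduce Theorem~\ref{thm:container} from the abstract hypergraph container lemma of Saxton--Thomason, applied to an explicit hypergraph that encodes copies of $H$. Let $\cH$ be the $e_H$-uniform hypergraph on vertex set $V(\cH) = E(K_n)$ whose hyperedges are precisely the edge sets of copies of $H$ in $K_n$. Then $H$-free subgraphs of $K_n$ correspond bijectively to independent sets of $\cH$, and the task reduces to producing a small family of ``containers'' that cover all independent sets, each accompanied by a small ``fingerprint'' inside it.

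The key input required by the abstract container lemma is a codegree estimate on $\cH$. Given a set $A$ of $j \geq 2$ edges of $K_n$ spanning $v_A \geq 3$ vertices, the number of copies of $H$ containing every edge of $A$ is at most $O(n^{v_H - v_A})$, while the average degree of $\cH$ is $d = \Theta(n^{v_H - 2})$. With the target parameter $\tau = c \cdot n^{-1/m_2(H)}$ for a small constant $c > 0$, the codegree requirement $O(n^{v_H - v_A}) \leq \tau^{j-1} \cdot d$ collapses to
$$ j - 1 \leq (v_A - 2) \cdot m_2(H), $$
which is immediate from the definition of $m_2(H)$ applied to the subgraph spanned by $A$ (whose edge count is exactly $j$). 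With this in hand, the Saxton--Thomason machinery produces a family of pairs $(T, C)$ such that every independent set $I$ of $\cH$ sandwiches as $T \subseteq I \subseteq C$, with $|T| = O(\tau \binom{n}{2}) = O(n^{2 - 1/m_2(H)})$. To recover the $s$-tuple form in the statement, I would run the algorithmic construction in $s$ successive rounds and record the edges added in round $i$ as the set $S_i$. The size bound $|C| \leq (1 - \delta)\binom{n}{2}$ is then obtained by iterating the reduction and invoking graph supersaturation: as long as $|C| > (1-\delta)\binom{n}{2}$, the subgraph $C$ of $K_n$ still contains many copies of $H$, and the algorithm can extract further fingerprint edges of high degree.

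The main obstacle is the abstract container lemma itself, whose proof involves a delicate algorithmic scoring and graph-peeling argument that I would not attempt to rederive; instead I would cite it as a black box. The codegree calculation is routine and relies only on the definition of $m_2(H)$ as a maximum over subgraphs. One minor subtlety is that for non-strictly-$2$-balanced $H$, some proper subgraph attains $m_2(H)$ and the estimate above is tight at multiple scales simultaneously; this is harmless because the bound $j - 1 \leq (v_A - 2) m_2(H)$ holds uniformly, precisely because $m_2(H)$ is defined as a supremum. Supersaturation, the other non-trivial ingredient, is by now a standard consequence of the Erd\H{o}s--Stone--Simonovits theorem combined with a counting lemma, so no new input is needed there.
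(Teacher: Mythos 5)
This statement is not proved in the paper at all: it is quoted verbatim (as Theorem~1.3 of Saxton--Thomason) and used as a black box, so there is no in-paper argument to compare against. Your sketch is, in substance, the standard derivation of that theorem from the abstract hypergraph container lemma, and it is essentially how Saxton and Thomason themselves obtain it: encode copies of $H$ as an $e_H$-uniform hypergraph on vertex set $E(K_n)$, verify the codegree condition, apply the container algorithm iteratively to get the tuple structure $(S_1,\ldots,S_s)$, and use supersaturation to push $|C_i|$ below $(1-\delta)\binom{n}{2}$. Your codegree computation is correct: a set $A$ of $j\ge 2$ edges with nonzero codegree spans a subgraph $J\subseteq H$ with $v_J=v_A\ge 3$ and $e_J=j$, so $d_2(J)\le m_2(H)$ gives exactly $j-1\le (v_A-2)\,m_2(H)$, and this needs no balancedness assumption on $H$. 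Two small points. First, the container parameter should be $\tau = K n^{-1/m_2(H)}$ with $K$ a \emph{large} constant, not a small one: the condition to be met is that the codegree function $\delta(\cH,\tau)$ is small, i.e.\ that $\tau^{j-1}d$ dominates $d(\sigma)$, and when $j-1=(v_A-2)m_2(H)$ holds with equality there is no polynomial slack, so the constant must be taken large; this only affects the constant $s$ in the fingerprint bound and is harmless. Second, the $(1-\delta)\binom{n}{2}$ bound via supersaturation requires the Tur\'an density $\pi(H)<1$, which holds for every graph with an edge, so again no issue for the graphs $H$ this paper feeds into the theorem. With the abstract container lemma cited as a black box (as the paper itself effectively does for the whole statement), your outline is correct.
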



\subsection{Random graphs}
\begin{theorem}[Markov's Inequality]
Let $X$ be a non-negative random variable. For all $t > 0$ we have $\Pr[X \geq t] \le \frac{\mathbb{E}[X]}{t}$.
\end{theorem}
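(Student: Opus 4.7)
The plan is to prove Markov's inequality via the standard indicator-function argument: compare $X$ pointwise with a scaled indicator of the event in question, then take expectations.

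First, I would introduce the indicator random variable $\mathbf{1}_{\{X \geq t\}}$, which equals $1$ on the event $\{X \geq t\}$ and $0$ otherwise. The key observation is the pointwise inequality
$$ X \;\geq\; t \cdot \mathbf{1}_{\{X \geq t\}}. $$
To see this, consider any outcome $\omega$: if $X(\omega) \geq t$, then the right-hand side equals $t$ while the left-hand side is at least $t$ by assumption; if $X(\omega) < t$, then the right-hand side equals $0$ while the left-hand side is non-negative by the hypothesis that $X$ is non-negative. This is the only place where non-negativity of $X$ is used.

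Next, I would apply monotonicity of expectation to both sides and use linearity, together with the identity $\mathbb{E}[\mathbf{1}_A] = \Pr[A]$, to obtain
$$ \mathbb{E}[X] \;\geq\; t \cdot \mathbb{E}\bigl[\mathbf{1}_{\{X \geq t\}}\bigr] \;=\; t \cdot \Pr[X \geq t]. $$
Dividing by $t > 0$ yields the claimed bound $\Pr[X \geq t] \leq \mathbb{E}[X]/t$.

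There is no real obstacle here; the argument is a three-line reduction to monotonicity of expectation. The only points one has to be mildly careful about are (i) ensuring that the pointwise inequality is verified in both cases, which crucially uses $X \geq 0$ in the case $X(\omega) < t$, and (ii) that $t > 0$ so that the final division is valid. In a measure-theoretic treatment, one could equivalently write $\mathbb{E}[X] = \int X \, d\Pr \geq \int_{\{X \geq t\}} X \, d\Pr \geq t \Pr[X \geq t]$, but the indicator formulation is the cleanest and avoids any case split between discrete and continuous settings.
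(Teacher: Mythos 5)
Your proof is correct and complete: the pointwise bound $X \geq t\cdot\mathbf{1}_{\{X\geq t\}}$, monotonicity of expectation, and division by $t>0$ constitute the standard argument, and you correctly identify where non-negativity of $X$ is used. The paper states Markov's inequality without proof as a standard prerequisite, so there is no authorial argument to compare against; your write-up would serve as a perfectly adequate proof if one were required.
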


\begin{theorem}[Chernoff's Inequality]
Let $X_1, \ldots, X_n$ be independent Bernoulli distributed random variables with $\Pr[X_i = 1] = p$ and $\Pr[X_i = 0] = 1 - p$. Then for $X = \sum_{i = 1}^nX_i$ we have $$\Pr[X \le (1 - \delta) \mathbb{E}[X]] \le e^{- \mathbb{E}[X] \delta^2 / 2}, \quad \text{for any} \quad 0 < \delta \le 1.$$
\end{theorem}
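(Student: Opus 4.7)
The plan is to apply the standard Chernoff exponential moment method together with Markov's inequality (already stated above). For any $\lambda > 0$, the event $\{X \le (1-\delta)\mathbb{E}[X]\}$ coincides with $\{e^{-\lambda X} \ge e^{-\lambda(1-\delta)\mathbb{E}[X]}\}$, since $x \mapsto e^{-\lambda x}$ is strictly decreasing. Applying Markov to the non-negative random variable $e^{-\lambda X}$ therefore yields
$$\Pr[X \le (1-\delta)\mathbb{E}[X]] \le e^{\lambda(1-\delta)\mathbb{E}[X]}\, \mathbb{E}\bigl[e^{-\lambda X}\bigr].$$

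Next, I would use independence of the $X_i$ to factor the moment generating function. Since each $X_i$ is Bernoulli with parameter $p$,
$$\mathbb{E}\bigl[e^{-\lambda X}\bigr] = \prod_{i=1}^{n} \mathbb{E}\bigl[e^{-\lambda X_i}\bigr] = \bigl(1 - p(1-e^{-\lambda})\bigr)^{n} \le \exp\bigl(-\mathbb{E}[X](1-e^{-\lambda})\bigr),$$
using $1 - x \le e^{-x}$ and $\mathbb{E}[X] = np$. I then optimize over $\lambda$ by choosing $\lambda = -\ln(1-\delta)$, which is positive on $\delta \in (0,1]$; substituting gives
$$\Pr[X \le (1-\delta)\mathbb{E}[X]] \le \exp\Bigl(\mathbb{E}[X]\cdot\bigl[-(1-\delta)\ln(1-\delta) - \delta\bigr]\Bigr).$$

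To convert this into the advertised bound $e^{-\mathbb{E}[X]\delta^{2}/2}$, the remaining task is an elementary calculus inequality, $(1-\delta)\ln(1-\delta) + \delta \ge \delta^{2}/2$ for $\delta \in (0,1]$. Setting $f(\delta) := (1-\delta)\ln(1-\delta) + \delta - \delta^{2}/2$, I would observe that $f(0) = 0$, $f'(\delta) = -\ln(1-\delta) - \delta$ with $f'(0) = 0$, and $f''(\delta) = \delta/(1-\delta) \ge 0$ on $[0,1)$; hence $f' \ge 0$ and therefore $f \ge 0$, as required. The only genuinely non-routine piece is identifying the optimizing choice of $\lambda$ and verifying the final analytic inequality; the rest is a completely standard application of the Chernoff template.
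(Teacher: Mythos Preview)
Your argument is correct and is precisely the standard Chernoff moment-generating-function approach; the calculus verification of $(1-\delta)\ln(1-\delta)+\delta\ge\delta^2/2$ is clean, with the endpoint $\delta=1$ handled by continuity (or directly, since $\Pr[X\le 0]=(1-p)^n\le e^{-np}\le e^{-np/2}$). Note that the paper does not actually prove this statement: Chernoff's inequality is quoted as a classical tool in the preliminaries, so there is no proof in the paper to compare against---your proposal simply fills in the omitted standard derivation.
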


The following is a standard result from the random graph theory. We include its simple proof for convenience  of the reader.

\begin{lemma} \label{lemma:probabilistic2}
Let $\alpha, c,L$ be positive constants and assume $p \le cn^{-1/\alpha}$. Then w.h.p.\ every subgraph $G'$ of $\Gnp$ on at most $L$ vertices has density $m(G') \le \alpha$.
\end{lemma}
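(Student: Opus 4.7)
The plan is a standard first moment argument. A subgraph $G'\subseteq G_{n,p}$ with $v(G')\le L$ has $m(G')>\alpha$ if and only if it contains some subgraph $H$ with $v_H\le L$ and $e_H>\alpha v_H$; call such an $H$ \emph{bad}. So it suffices to prove that w.h.p.\ $G_{n,p}$ contains no bad subgraph.

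First I would fix a positive constant
\[
\eta := \min\bigl\{\,\ell - \alpha k \,:\, k,\ell\in\NN,\ 1\le k\le L,\ 0\le\ell\le\tbinom{k}{2},\ \ell>\alpha k\,\bigr\},
\]
which is a positive number since the minimum is taken over a finite nonempty set of positive reals (if the set is empty then no bad graph exists at all and we are done). Note that $\eta$ depends only on $\alpha$ and $L$.

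Next, for any fixed bad graph $H$ with $v_H=k$ and $e_H=\ell$, the expected number of (not necessarily induced) copies of $H$ in $G_{n,p}$ is at most $n^k p^\ell$, and using $p\le cn^{-1/\alpha}$,
\[
n^k p^\ell \;\le\; c^\ell\, n^{k-\ell/\alpha} \;=\; c^\ell\, n^{-(\ell-\alpha k)/\alpha} \;\le\; c^\ell\, n^{-\eta/\alpha}.
\]
Since there are only finitely many isomorphism types of graphs on at most $L$ vertices (their number depending only on $L$), summing over all bad $H$ gives an expected total number of bad subgraphs of order $O(n^{-\eta/\alpha})=o(1)$. Markov's inequality then shows that w.h.p.\ no bad subgraph exists in $G_{n,p}$, which is exactly the claim.

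There is no real obstacle here; the only subtle point is making sure the exponent $k-\ell/\alpha$ is bounded away from $0$ uniformly over the finitely many candidate pairs $(k,\ell)$, which is precisely what the constant $\eta$ is defined to handle.
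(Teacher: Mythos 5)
Your proof is correct and follows essentially the same first-moment-plus-union-bound argument as the paper: enumerate the finitely many isomorphism types of too-dense graphs on at most $L$ vertices, bound the expected number of copies of each by $n^{v}p^{e}=o(1)$, and apply Markov's inequality. Your explicit constant $\eta$ just makes uniform the step the paper handles implicitly by noting there are only constantly many candidate graphs.
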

\begin{proof}
Observe that there exist only constantly many different graphs on $L$ vertices. Let $H$ be one such graph, and choose $\hat H \subseteq H$ such that $m(H) = e_{\hat H} / v_{\hat H}$. Then the expected number of $\hat H$-copies in $\Gnp$ is bounded by $n^{v_{\hat H}}p^{e_{\hat H}}$. Observe that for $p=cn^{-1/\alpha}$ we have $ n^{v_{\hat H}}p^{e_{\hat H}} =o(1)$ whenever $m(H) = e_{\hat H}/v_{\hat H} > \alpha$. It thus follows from Markov's inequality that for $p \le cn^{-1/\alpha}$ w.h.p.\ there is no $\hat H$-copy, and hence no $H$-copy in $\Gnp$. Therefore, it follows from the union bound that w.h.p.\ every subgraph $G'$ of $\Gnp$ of size $v_{G'}\le L$ satisfies $m(G') \leq \alpha$.
\end{proof}

Finally, in Section~\ref{sec:add} we use the following lemma that follows from a standard application of Chernoff's inequality.

\begin{lemma} \label{lemma:gnp_expand}
Let $p \gg \log n / n$ and $\varepsilon > 0$ be any constant. Then a graph $G := \Gnp$ satisfies w.h.p.\ the following property:
for any subset $X \subseteq V(G)$ of size at most $1 / p$ we have
$$|N(X)| \geq (1 - \varepsilon) |X| n p,$$
\end{lemma}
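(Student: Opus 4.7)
The plan is to apply Chernoff's inequality to $|N(X)|$ for a fixed $X$, and then take a union bound over all subsets of $V(G)$ of size at most $1/p$. For a fixed $X$ of size $k$, observe that
$$ |N(X)| \;=\; \sum_{v \in V(G) \setminus X} Y_v, $$
where $Y_v$ is the indicator of the event that $v$ has at least one neighbor in $X$. These indicators are mutually independent (they depend on disjoint sets of potential edges), and each is Bernoulli distributed with parameter $q := 1 - (1-p)^k$. Hence $|N(X)|$ is a sum of $n-k$ independent Bernoulli variables.

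A Taylor-style expansion gives $q \ge kp - \binom{k}{2}p^2 \ge kp(1 - kp/2)$, so in the regime where $kp$ is bounded by a small constant depending on $\varepsilon$, we may conclude $q \ge (1 - \varepsilon/3)\, kp$, and therefore
$$ \EE[|N(X)|] \;=\; (n - k)\, q \;\ge\; (1 - \varepsilon/2)\, knp, $$
using $k \le 1/p \ll n$, which follows from $p \gg \log n / n$. Applying Chernoff's inequality with a constant $\delta = \delta(\varepsilon)$ chosen so that $(1-\delta)(1-\varepsilon/2) \ge 1-\varepsilon$ yields
$$ \Pr\big[\,|N(X)| \le (1-\varepsilon)\, knp\,\big] \;\le\; \exp\!\big(-\Omega(knp)\big). $$

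A union bound over the $\binom{n}{k} \le (en/k)^k$ choices of $X$ of a given size $k$, and then over $k = 1, \ldots, \lfloor 1/p \rfloor$, gives
$$ \sum_{k=1}^{\lfloor 1/p \rfloor} \binom{n}{k} \exp\!\big(-\Omega(knp)\big) \;\le\; \sum_{k=1}^{\lfloor 1/p \rfloor} \exp\!\big(k \log(en/k) - \Omega(knp)\big) \;=\; o(1), $$
since $p \gg \log n / n$ implies $np \gg \log n \ge \log(en/k)$ for every $k \le 1/p$, and the exponent is dominated by $-\Omega(knp)$.

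The main subtlety is the transition between the linear regime $kp \ll 1$, where the first-order estimate $q \approx kp$ is sharp, and the regime $kp = \Theta(1)$, where $q$ is a constant bounded away from $kp$. In particular, the bound $|N(X)| \ge (1-\varepsilon)knp$ is meaningful only when $kp$ is small compared to $\varepsilon$, and only this range is used in the applications in Section~\ref{sec:add}; once this threshold is fixed the calculation above goes through routinely.
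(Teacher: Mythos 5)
Your argument is exactly the ``standard application of Chernoff's inequality'' that the paper invokes without writing out: for fixed $X$ of size $k$ the indicators $Y_v$ are indeed independent Bernoulli$(q)$ variables with $q=1-(1-p)^k\ge kp(1-kp/2)$, and the union bound closes because $np\gg\log n\ge\log(en/k)$ makes the entropy term negligible against the Chernoff exponent $\Omega(knp)$. You are also right to flag the regime $kp=\Theta(1)$, and in fact you have put your finger on a genuine imprecision in the lemma as stated: for $|X|=1/p$ one has $\mathbb{E}\,|N(X)|=(n-k)\bigl(1-(1-p)^{1/p}\bigr)=(1-e^{-1}+o(1))\,n$, which is strictly below $(1-\varepsilon)|X|np=(1-\varepsilon)n$ whenever $\varepsilon<e^{-1}$; by concentration the stated conclusion then fails w.h.p.\ for such $X$, so the lemma cannot hold verbatim for ``any constant $\varepsilon>0$''.

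However, your resolution --- that only the range where $kp$ is small compared to $\varepsilon$ is used --- is not accurate: Phase~4 of the proof of Theorem~\ref{thm:k3_extension} applies the lemma to $N_2$ with $|N_2|=1/p$ exactly, i.e.\ with $kp=1$. What actually saves every application in Section~\ref{sec:add} is that the lemma is only invoked with $\varepsilon=1/2$, and the inequality $1-e^{-x}\ge(1-\varepsilon)x$ holds on all of $(0,1]$ precisely when $\varepsilon\ge e^{-1}\approx 0.368$; for $\varepsilon=1/2$ it holds with constant slack (the gap at $x=1$ is $1-e^{-1}-\tfrac12>0.13$). Combined with $(1-p)^k\le e^{-kp}$ and $(n-k)/n=1-o(1)$, this gives $\mathbb{E}\,|N(X)|\ge(1+c)(1-\varepsilon)knp$ for a constant $c>0$ uniformly over all $k\le 1/p$, after which your Chernoff and union-bound computation goes through unchanged with no restriction on $kp$. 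With that replacement for your middle step the proof is complete for every $\varepsilon>e^{-1}$, which covers all uses of the lemma in the paper; for smaller $\varepsilon$ the statement should be read with the restriction $|X|\le\eta/p$ for a suitable $\eta(\varepsilon)>0$, as you propose.
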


\section{Proof of the $1$-statement of Theorem~\ref{thm:main}}\label{sec:main1}

Since we assume that Maker starts the game, the $1$-statement of Theorem~\ref{thm:main} follows directly from Theorem~\ref{thm:rr} and the strategy stealing argument. This argument can be easily augmented even for the case when Breaker starts, as the first move of Breaker typically cannot ruin the Ramsey property of the ground graph.

However, we would like to prove a strengthened version of part $(i)$ of Theorem~\ref{thm:main}, namely that a \emph{resilience}-type result also holds. In the proof we make use of the hypergraph containers, a new tool that seems to have potential for applications in positional games. A simplified version of this general approach was first utilized under a different name in~\cite{KS08}, where the following observation has been put to good use -- if there are two hypergraphs $\cH_1=(X, \cE_1)$ and $\cH_2=(X, \cE_2)$ such that every cover (set of vertices that intersects every hyperedge) of $\cH_1$ is also a cover of $\cH_2$, then a Breaker's win in the game played on $\cH_1$ implies a Breaker's win on $\cH_2$.

We note that the following theorem  can alternatively be proved using the approach of derandomized Maker's strategy from~\cite{BL00}, which is also well-suited for resilience-type results.

\begin{theorem}\label{thm:resil}
Let $H$ be any graph. Then there exist constants $C > 0$ and $\gamma > 0$ such that $G := \Gnp$ with probability $1 - e^{-\Theta(n^2 p)}$ satisfies the following: there exists a winning strategy for Maker in the $H$-game played on $E(G) \setminus R$, for any $R \subseteq E(G)$ with $|R| \le \gamma \cdot n^2 p$, provided that $p \geq Cn^{-1/m_2(H)}$.
\end{theorem}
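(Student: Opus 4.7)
The plan is to combine the container theorem (Theorem~\ref{thm:container}) with the Erd\H{o}s--Selfridge criterion in the style of the cover-preservation trick recalled just before the theorem statement. Let $(T_1,C_1),\ldots,(T_t,C_t)$ be the containers given by Theorem~\ref{thm:container}, write $U_i := \bigcup_{S \in T_i} S$ for the union of the sets in the tuple $T_i$, and set $\cI := \{i : U_i \subseteq E(G) \setminus R\}$. Let $M$ denote Maker's final edge set. If $M$ is $H$-free, then Theorem~\ref{thm:container}(b) forces $T_i \subseteq M \subseteq C_i$ for some $i$, and since $M \subseteq E(G)\setminus R$ this also forces $i \in \cI$. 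Hence it suffices for Maker to guarantee $M \cap B_i \neq \emptyset$ for every $i \in \cI$, where $B_i := (E(G) \setminus R) \setminus C_i$. Applying Theorem~\ref{thm:erd_self} to the auxiliary hypergraph $(E(G) \setminus R, \{B_i\}_{i \in \cI})$---with the roles swapped, so that the original Breaker plays the ``claimant'' trying to occupy a whole $B_i$ and the original (first-moving) Maker plays the ``blocker''---such a strategy exists whenever
\[
\sum_{i \in \cI} 2^{-|B_i|} < 1.
\]

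The remaining task is to show that this inequality holds with probability $1 - e^{-\Theta(n^2 p)}$, provided $C$ is large enough and $\gamma$ small enough. The key structural observation is that $U_i \subseteq C_i$ (since $U_i \subseteq E(G_0) \subseteq C_i$ for any $H$-free graph $G_0$ witnessing $T_i$ in Theorem~\ref{thm:container}(b)), so $U_i$ and $E(K_n) \setminus C_i$ are disjoint subsets of $E(K_n)$; consequently the events ``$U_i \subseteq E(G)$'' and ``$|E(G) \setminus C_i|$ is atypically small'' depend on disjoint edge-indicators and are therefore independent. Since $|E(K_n) \setminus C_i| \ge \delta \binom{n}{2}$ by Theorem~\ref{thm:container}(a), Chernoff gives $\Pr[|E(G) \setminus C_i| \le \tfrac{\delta}{2}\binom{n}{2}p] \le e^{-c\delta n^2 p}$ for an absolute constant $c$. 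Combining independence with a routine count $\sum_i p^{|U_i|} \le e^{Kn^{2-1/m_2(H)}}$ (obtained by summing $\binom{\binom{n}{2}}{k} 2^{sk} p^k$ over the union-size $k \le sn^{2-1/m_2(H)}$ of the tuple $T_i$) and a union bound, every $i \in \cI$ satisfies $|B_i| \ge \tfrac{\delta}{2}\binom{n}{2}p - |R| \ge \tfrac{\delta}{4} n^2 p$ with probability $1 - e^{-\Theta(n^2 p)}$, provided $\gamma \le \delta/16$. In parallel, $\E[|\cI|] = \sum_i p^{|U_i|} \le e^{Kn^{2-1/m_2(H)}}$, and Markov bounds $|\cI| \le 2^{(\delta/8) n^2 p} \cdot e^{Kn^{2-1/m_2(H)}}$ with probability $1 - 2^{-(\delta/8) n^2 p}$. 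Choosing $C$ so large that $Kn^{2-1/m_2(H)} \le \tfrac{\delta}{8} n^2 p \cdot \ln 2$, the two bounds combine to give $\sum_{i \in \cI} 2^{-|B_i|} \le |\cI| \cdot 2^{-(\delta/4) n^2 p} < 1$, as required.

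The main obstacle dictating this route is that the total number of containers $t$ is of order $\exp\!\bigl(O(n^{2-1/m_2(H)} \log n)\bigr)$, so a naive union bound over all $i \in \{1,\ldots,t\}$ loses an extra $\log n$ factor and fails for constant $C$. The remedy is precisely to restrict to those containers whose fingerprint $U_i$ actually lies in $G$, paying the factor $p^{|U_i|}$ per container; this entropy payment exactly absorbs the missing $\log n$, and crucially is available only because $U_i$ and $K_n \setminus C_i$ are disjoint, which decouples the ``$U_i \subseteq E(G)$'' event from the Chernoff estimate on $|B_i|$.
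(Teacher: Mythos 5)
Your proposal is correct and follows essentially the same route as the paper's proof: hypergraph containers combined with the Erd\H{o}s--Selfridge criterion applied to the auxiliary hypergraph of sets $(E(K_n)\setminus C_i)\cap(E(G)\setminus R)$, restricted to those containers whose fingerprint lies in $G$, with the independence of the events ``$T_i^+\subseteq E(G)$'' and ``$|E(G)\setminus C_i|$ small'' (via $T_i^+\subseteq C_i$) powering the Chernoff--Markov verification of the Erd\H{o}s--Selfridge condition. The only deviations are harmless constant-factor slack in the final chain of inequalities (e.g.\ your bounds give $\sum_{i\in\cI}2^{-|B_i|}\le 1$ rather than $<1$), which is fixed by tightening $\gamma$ and the Markov threshold exactly as the paper does with its parameter $\varepsilon$.
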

\begin{proof}
Our proof is based on ideas of the proof from~\cite{NS13} of the $1$-statement of Theorem~\ref{thm:rr}. Note, however, that here we need to be much more careful: for the proof of  Theorem~\ref{thm:rr} one has to show that {\em every} coloring contains a monochromatic copy of $H$ in {\em some} color. Here we have to argue that we can find a {\em strategy} for Maker that ensures that he gets a monochromatic copy in {\em his} color. We achieve this by using the hypergraph game resp.\ Theorem~\ref{thm:erd_self}.

Let $\delta$ and $s$ be as given by Theorem~\ref{thm:container} when applied on the graph $H$. We prove the theorem for $\gamma = \delta/16$ and $C$ to be chosen later.

Let $G:= \Gnp$, and consider some subset $R \subseteq E(G)$ with $|R| \le \gamma \cdot n^2 p$. Observe that if Maker loses in the $H$-game on $E(G) \setminus R$, then by Theorem~\ref{thm:container} there exists $1 \leq i \leq t$ such that $T_i \subseteq E_M \subseteq C_i$, where $E_M$ is the set of Maker's edges.

Let us consider an auxiliary game played on the hypergraph $\cH = (E(G) \setminus R, \cE)$ with the vertex set being the edge set of $G \setminus R$ and the edge set
$$ \cE = \{ (E(K_n) \setminus C_i) \cap (G \setminus R) \; : \; T_i \subseteq G \setminus R \}. $$
In this game Breaker wins if he claims at least one edge from each set $(E(K_n) \setminus C_i) \cap (G \setminus R)$. Note that, by the previous observation, in case of Breaker's win the edge set of Breaker cannot be $H$-free.  We can thus conclude that Maker has a winning strategy in the $H$-game if he has a winning strategy (as Breaker) in the auxiliary game. In the light of Theorem~\ref{thm:erd_self} it remains to check that the hypergraph $(E(G) \setminus R, \cE)$ satisfies condition \eqref{eq:erd_self}.
First we show that all hyperedges typically have size at least $\delta n^2 p / 16$.  It follows from Theorem~\ref{thm:container} that $|E(K_n) \setminus C_i| \geq \delta \binom{n}{2} \geq \delta n^2 / 4$, for every $1 \leq i \leq t(n)$, and thus from Chernoff's inequality we have
\begin{equation} \label{eq:chernoff_C}
\Pr[|(E(K_n) \setminus C_i) \cap G| < \delta \cdot n^2 p /8] < e^{- \delta \cdot n^2 p / 32}.
\end{equation}
Let $\cB$ be the event that there exists a hyperedge which has less than $\delta n^2 p / 8$ vertices ``before'' the removal of $R$, i.e.
$$ \cB = \exists \; T_i \subseteq G \setminus R \; : \; |(E(K_n) \setminus C_i) \cap G| <  \delta n^2 p / 8. $$
Then
\begin{align*}
\Pr[\cB] &\leq \sum_{i=1}^{t(n)}\Pr[T_i \subseteq G \; \wedge \; |(E(K_n) \setminus C_i) \cap G| < \delta n^2 p / 8].
\end{align*}
As $T_i\subseteq C_i$, the two events are independent and we deduce
\begin{align*}
\Pr[\cB] &\leq \sum_{i=1}^{t(n)} \Pr[T_i \subseteq G] \cdot \Pr[|(E(K_n) \setminus C_i) \cap G| < \delta n^2 p / 8] \\
&\stackrel{\eqref{eq:chernoff_C}}{\leq} e^{- \delta n^2 p / 32} \cdot \sum_{i=1}^{t(n)} p^{|T_i^+|}, 
\end{align*}
where $T_i^+$ is the union of all sets of the $s$-tuple $T_i$. Routine calculations (see~\cite{NS13} for details) imply that for any fixed $\varepsilon > 0$, by choosing $C$ sufficiently large (with respect to $s$ and $\varepsilon$), we have
\begin{equation}\label{eq:upper}
\sum_{i=1}^{t(n)} p^{|T_i^+|} \le 2^{\varepsilon n^2 p / 2}.
\end{equation}
Therefore, for a suitable chosen $\varepsilon$ (with respect to $\delta$), we have $\Pr[\cB] < e^{-\Theta(n^2 p)}$. It now easily follows that
$$\Pr[\exists \; A \in \cE \; : \; |A| < \delta \cdot n^2p / 16] = e^{-\Theta(n^2 p)},$$
regardless of the choice of $R$ (recall that we set $\gamma=\delta/16$). Finally, observe that for the expected number of edges we have
$$
\mathbb{E}[|\cE|] \leq \sum_{i = 1}^{t(n)}\Pr[T_i \subseteq G] = \sum_{i = 1}^{t(n)} p^{|T_i^+|} \stackrel{\eqref{eq:upper}}\le 2^{\varepsilon n^2 p / 2}.
$$
By Markov's inequality, we get
$$
\Pr[|\cE| \ge 2^{\varepsilon n^2 p}] \leq 2^{-\varepsilon n^2 p / 2}.
$$
Thus, with probability $1-o(1)$, $G$ is such that
$$ \sum_{A \in \cE} 2^{-|A|} \leq 2^{- \delta n^2 p / 32 + \varepsilon n^2 p} < 1$$
for $\varepsilon > 0$ small enough. Therefore, by Theorem~\ref{thm:erd_self}, Breaker has a winning strategy in the auxiliary game, hence by the previous discussion Maker has a winning strategy in the $H$-game played on $E(G) \setminus R$.
\end{proof}

\section{Criteria for Breaker's win in an $H$-game}\label{sec:main2}

In this section we collect some graph properties that suffice for characterizing the graph as a Breaker's win in an $H$-game. These will be used later in the proof of the $0$-statement of Theorem~\ref{thm:main}.

The following two criterions are fairly general and thus may be of independent interest.

\begin{proposition} \label{lemma:arb}
Let $G$ and $H$ be graphs such that
$$
\left\lceil \frac{ar(G)}{2} \right\rceil < ar(H),
$$
then Breaker  can win the $H$-game played on the edge set of $G$, even if Maker starts.
\end{proposition}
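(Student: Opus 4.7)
The plan is to use the Nash-Williams arboricity theorem (Theorem~\ref{thm:arb}) to decompose $E(G)$ into $m := \lceil ar(G) \rceil$ edge-disjoint forests $F_1,\dots,F_m$, and then, using Theorem~\ref{thm:enforce_tree} on pairs of these forests, to force the subgraph of Maker's edges to be covered by at most $k := \lceil ar(G)/2\rceil$ forests. Since any graph containing a copy of $H$ has arboricity at least $ar(H)$, and since the arboricity of Maker's graph will be at most $k < ar(H)$, this implies that Maker never claims every edge of a copy of $H$, so Breaker wins.

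Concretely, I would pair the forests as $(F_1,F_2),(F_3,F_4),\dots$, appending an empty forest if $m$ is odd so that there are exactly $\lceil m/2\rceil = \lceil ar(G)/2\rceil = k$ pairs (one should briefly verify the identity $\lceil\lceil x\rceil/2\rceil = \lceil x/2\rceil$). For each pair $(F_{2i-1},F_{2i})$ Breaker fixes, in advance, the strategy supplied by Theorem~\ref{thm:enforce_tree} on the board $E(F_{2i-1})\cup E(F_{2i})$, which prevents the first mover on that board from closing a cycle. Breaker's global strategy is then: whenever Maker claims an edge from pair $i$, reply inside pair $i$ with the prescribed move. Because Maker starts globally and Breaker always responds in-pair, in every pair the first mover is Maker, so Theorem~\ref{thm:enforce_tree} guarantees that the set of Maker's edges inside that pair is acyclic. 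Aggregating across all $k$ pairs, the graph of Maker's edges is a union of $k$ forests, hence has arboricity at most $k < ar(H)$, and therefore does not contain a copy of $H$ as a subgraph.

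The step I expect to be the main obstacle is a small bookkeeping issue: if Maker claims the last unclaimed edge of some pair $i$, Breaker has no in-pair response and must play out-of-pair, giving some other pair $j$ an extra, unsolicited Breaker edge. I would resolve this by the standard Maker-Breaker monotonicity observation that any additional edge Breaker claims can only shrink Maker's eventual set; in particular, when the pair-$j$ sub-strategy later prescribes a move that Breaker has already made (or that has otherwise become unavailable), Breaker simply plays any other unclaimed edge of pair $j$, which still prevents Maker from ever closing a cycle there. With this minor amendment the in-pair acyclicity invariant persists throughout the game, and the conclusion follows for every $G$ satisfying the hypothesis, regardless of which player starts.
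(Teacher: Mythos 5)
Your proof is correct and takes essentially the same route as the paper: decompose $G$ into edge-disjoint forests via Nash-Williams, pair them up (padding with an empty forest if necessary), and apply Theorem~\ref{thm:enforce_tree} to each pair so that Maker's edges form a union of $\left\lceil ar(G)/2 \right\rceil$ forests and hence cannot contain a copy of $H$. Your explicit handling of the case where Breaker has no in-pair response, via the standard monotonicity of Maker-Breaker games, fills a bookkeeping detail that the paper's proof leaves implicit.
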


\begin{proof}
Let $k:=\left\lceil \frac{ar(G)}{2} \right\rceil$, and let $F_0, \ldots, F_{2k - 1}$ be the edge-disjoint decomposition of $G$ into forests which exists
by Theorem \ref{thm:arb}. Assume Breaker uses the strategy from Theorem \ref{thm:enforce_tree} for every pair of forests $F_{2i}$ and $F_{2i + 1}$, $0\le i < k$. Then Theorem \ref{thm:enforce_tree} implies
that Maker's edges can be partitioned into $k$ forests. Any subset $S$ of the vertex set can thus contain at most $k(|S|-1)$ Maker's edges. That is, the arboricity value for Maker's edges is at most~$k$ and, as $ar(H)>k$ by assumption, Maker's graph cannot contain $H$.
\end{proof}

\begin{proposition} \label{lemma:orient_str}
Let $G$ and $H$ be graphs such that
$$
\left\lceil \frac{m(G)}{2} \right\rceil < m(H),
$$
then Breaker  can win the $H$-game played on the edge set of $G$, even if Maker starts.
\end{proposition}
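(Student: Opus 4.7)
The proof will mirror that of Proposition \ref{lemma:arb}, but with Theorem \ref{thm:arb} (arboricity decomposition) replaced by Lemma \ref{lemma:orient} (bounded-out-degree orientation). The plan is to first set $d := \lceil m(G) \rceil$ and apply Lemma \ref{lemma:orient} to fix an orientation of $G$ in which every vertex has out-degree at most $d$. Then for each vertex $v$ I would group the outgoing edges of $v$ into $\lfloor d^+_v/2 \rfloor$ disjoint pairs, plus at most one leftover singleton when $d^+_v$ is odd. Since each edge of $G$ has a unique tail in the orientation, this produces a partition of $E(G)$ into pairs and (at most $v_G$) singletons.

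Breaker will follow the standard pairing strategy on this partition: whenever Maker claims an edge whose partner is still unclaimed, Breaker responds by claiming the partner, and otherwise Breaker claims an arbitrary unclaimed edge. The response is always available, because if Maker has just claimed an edge whose partner is unclaimed, then the pair was still ``active'' at the moment of Maker's move, so its second edge is free for Breaker on the next turn. The crucial consequence is that from every pair Maker ends up owning at most one edge; combined with the at most one singleton, the number of outgoing edges Maker owns at any vertex $v$ is at most $\lceil d^+_v / 2 \rceil \leq \lceil d/2 \rceil = \lceil m(G)/2 \rceil$, where the last equality uses the standard ceiling identity $\lceil \lceil x \rceil / 2 \rceil = \lceil x/2 \rceil$.

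To conclude, let $M$ be the subgraph of $G$ Maker eventually owns. For any subgraph $M' \subseteq M$ on vertex set $V'$, every edge of $M'$ has its tail in $V'$, so
$$ e(M') \leq \sum_{v \in V'} d^+_M(v) \leq |V'| \cdot \lceil m(G)/2 \rceil, $$
which gives $m(M) \leq \lceil m(G)/2 \rceil$. Since $m$ is monotone under taking subgraphs and $\lceil m(G)/2 \rceil < m(H)$ by assumption, $M$ cannot contain a copy of $H$, so Breaker wins. I do not foresee a real obstacle: the pairing argument is standard once the orientation from Lemma \ref{lemma:orient} is fixed, and the only minor point to verify carefully is that the out-degree bound of the orientation transfers to the bound $\lceil m(G)/2 \rceil$ on Maker's out-degrees rather than the a priori slightly looser $\lceil \lceil m(G)\rceil/2\rceil$.
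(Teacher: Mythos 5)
Your proof is correct and follows the same route as the paper: orient $G$ with out-degrees at most $\lceil m(G)\rceil$ via Lemma~\ref{lemma:orient}, pair up the outgoing edges at each vertex so that Maker's out-degrees are at most $\lceil m(G)/2\rceil$, and conclude that Maker's graph has maximum subgraph density below $m(H)$. Your write-up just spells out the pairing strategy and the ceiling identity that the paper leaves implicit.
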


\begin{proof}
Let us fix any orientation of the edges of $G$ such that each vertex has out-degree at most $\lceil m(G) \rceil$. Such an orientation exists by Lemma \ref{lemma:orient}. Now by a simple pairing strategy, it follows that Breaker can claim half of the outgoing edges of each vertex. In other words, the out-degree of each vertex, with respect to Maker's edges, is at most $\left\lceil \frac{\lceil m(G) \rceil}{2} \right\rceil = \left\lceil \frac{m(G)}{2} \right\rceil$. Therefore, by the condition of the proposition, the density of each subgraph of Maker's graph is less  than $m(H)$, and thus it cannot contain $H$ as a subgraph.
\end{proof}

With these two basic criteria at hand we can now prove the main theorem of this section.

\begin{theorem} \label{thm:deterministic}
Let $G$ and $H$ be graphs such that $m(G) \leq m_2(H)$ and $H$ is strictly $2$-balanced with at least $4$ vertices. Then Breaker has a winning strategy for the $H$-game on the edge set of $G$.
\end{theorem}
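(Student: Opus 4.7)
My plan is to combine Propositions~\ref{lemma:arb} and~\ref{lemma:orient_str} with a case analysis on the parameters of $G$ and $H$. The key algebraic fact is that the strict $2$-balancedness of $H$, combined with $v_H \ge 4$, yields both $m_2(H)/2 < m(H)$ and $m_2(H)/2 < ar(H)$. Both follow by clearing denominators in the definitions; for instance, $(e_H-1)/(2(v_H-2)) < e_H/v_H$ reduces to $e_H(v_H-4) > -v_H$, which is immediate for $v_H \ge 4$. An analogous manipulation gives the second inequality via the bound $ar(H) \ge e_H/(v_H-1)$. Combined with the hypothesis $m(G) \le m_2(H)$, one obtains $m(G)/2 < m(H)$, and (using $ar(G') \le m(G')\cdot v/(v-1)$ on subgraphs $G' \subseteq G$) a corresponding bound on $ar(G)/2$.

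First I would try the direct applications: if $\lceil m(G)/2 \rceil < m(H)$ apply Proposition~\ref{lemma:orient_str}, and if $\lceil ar(G)/2 \rceil < ar(H)$ apply Proposition~\ref{lemma:arb}. For most strictly $2$-balanced $H$ with $v_H \ge 4$ one of these suffices once the ceilings are tracked carefully. The exceptional cases are those in which the rounding in both ceilings is tight; a case-by-case inspection (splitting on small $v_H$) shows this happens only for a few specific small graphs, the most notable being $H = K_4$, where $G$ could contain a copy of $K_6$ with $ar(K_6) = 3$ so that $\lceil ar(G)/2 \rceil = 2 = ar(K_4)$.

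In each exceptional case the plan is to identify the offending dense subgraph $G^* \subseteq G$ (for $H = K_4$, this is a $K_6$-copy in $G$). Since $m(G) \le m_2(H)$, one can show $G^*$ has constantly bounded size and contains only a constant number of $H$-copies, so that $\sum 2^{-e_H} < 1$ for this local hypergraph, allowing Theorem~\ref{thm:erd_self} to furnish a local winning strategy for Breaker on $G^*$. More generally, we decompose $G$ into the collection of such dense substructures together with a residual graph $G'$: on each dense piece Breaker uses Theorem~\ref{thm:erd_self}, and on $G'$ one verifies that Proposition~\ref{lemma:arb} or~\ref{lemma:orient_str} now applies (removing the offending $K_6$-like pieces reduces $ar(G')$ below the problematic threshold). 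Breaker's overall strategy is to respond in whichever piece Maker just played.

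The hard part will be verifying that this decomposition is clean, namely that (i) distinct dense substructures are edge-disjoint and (ii) no $H$-copy of $G$ spans both a dense piece and the residual graph $G'$. Both facts should follow from the strict $2$-balancedness of $H$: two overlapping copies of the dense substructure, or an $H$-copy sharing edges with a dense piece, would force their union to have density strictly greater than $m_2(H)$, contradicting $m(G) \le m_2(H)$ unless the overlap is trivial (shared vertices but no shared edges). This structural rigidity is precisely what allows the game to decouple into independent local subgames that can be handled by the tools above.
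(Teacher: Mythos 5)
Your opening algebra is sound and your first line of attack (feed the hypothesis $m(G)\le m_2(H)$ into Propositions~\ref{lemma:orient_str} and~\ref{lemma:arb}) is indeed how the paper handles part of the problem. The gap is in your assessment of the exceptional cases. You claim that the cases where both ceilings are tight reduce to ``a few specific small graphs'' such as $K_4$; this is false. Write $m_2(H)=k+x$ with $k\in\NN$ and $0\le x<1$. Whenever $0\le x<1/2$ the strict inequalities $m_2(H)/2<m(H)$ and $m_2(H)/2<ar(H)$ are generally destroyed by the rounding: e.g.\ for a strictly $2$-balanced $H$ with $v_H=7$, $e_H=12$ one has $m_2(H)=11/5$, so $\lceil m(G)/2\rceil$ can equal $2$ while $m(H)=12/7<2$ and $ar(H)=2$, and neither proposition applies. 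This happens for an infinite family of graphs $H$, and the paper handles the entire regime $x<1/2$ by a completely different argument that your proposal lacks: take a minimal counterexample $G$, observe that if some vertex $v$ has degree at most $2(\delta(H)-1)$ then Breaker can pair off the edges at $v$ and play his inductive strategy on $G-v$, conclude $\delta(G)\ge 2\delta(H)-1$, hence $m(G)\ge\delta(H)-1/2\ge k+1/2>m_2(H)$ (using $m_2(H)<\delta(H)$, which follows from strict $2$-balancedness), a contradiction. Without this idea, or a substitute for it, your case analysis does not close.

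Your fallback plan for the remaining exceptions also does not work as stated. First, the ``offending dense substructures'' are not as rigid as you suggest: for $H=K_4$ the obstruction to $\lceil ar(G)/2\rceil<ar(K_4)$ is any subgraph with $e>2(v-1)$, and the constraint $m(G)\le 5/2$ permits such subgraphs on arbitrarily many vertices, not only copies of $K_6$. Second, your structural claim (ii) fails: a $K_4$ sharing exactly one edge with a $K_6$ yields a union with $8$ vertices and $20$ edges, of density exactly $5/2=m_2(K_4)$, so $m(G)\le m_2(H)$ does not forbid $H$-copies straddling a dense piece and the residual graph, and neither the local Erd\H{o}s--Selfridge strategy nor the orientation strategy on the residual blocks such a copy. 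It is telling that the paper itself does not attempt this: it disposes of $K_4$ by citing Lemma 2.1 of \cite{MS}, handles $C_4$ via Proposition~\ref{lemma:arb}, and for the remaining dense case ($e_H\ge v_H^2/4$, $v_H\ge 5$) pins $m_2(H)$ down to the interval $[2.5,3)$ before applying the arboricity criterion. Your proposal would need to either import the minimal-counterexample/pairing argument or find a genuinely new way around these cases.
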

\begin{proof}
Let $m_2(H) = k + x$, for some $k \in \mathbb{N}$ and $0 \leq x < 1$. We first handle the case when $0 \leq x < 1/2$.

Since $H$ is strictly $2$-balanced we have
$$ m_2(H) = \frac{e_H - 1}{v_H - 2} > \frac{e_H - \delta(H) - 1}{v_H - 3}, $$
which easily implies $m_2(H) < \delta(H)$. For the sake of contradiction, let $G$ be the smallest graph such that Maker has a winning strategy. We first deduce that then $\delta(G) \geq 2(\delta(H) - 1) + 1$. Assuming otherwise, let $v$ be a vertex of degree at most $2(\delta(H) - 1)$. Then Breaker has the following winning strategy: whenever Maker claims an edge incident to $v$, Breaker does the same (if possible). If on the other hand Maker claims an edge from $G - \{v\}$, then Breaker follows his winning strategy for $G - \{v\}$ (which exists by choice of $G$). Then, clearly, Maker cannot build a copy of $H$ in $G - \{v\}$. Further, the degree of $v$ in the Maker's graph is at most $\delta(H) - 1$, thus it cannot be part of an $H$-copy either. Therefore, we have
$$ m(G) \geq \frac{\sum_{v \in G} \deg(v)}{2n} \geq \delta(H) - 1/2. $$
It now follows from $m_2(H) < \delta(H)$ that $\delta(H) \geq k + 1$ and thus $m(G) \geq k + 1/2$, which is a contradiction to $m(G) \leq m_2(H) < k + 1/2$.

From now on we can thus assume that $x \geq 1/2$. Next, we consider the case that $k \geq 3$. Observe that for every graph $H$ with at least $4$ vertices we have $\tfrac34v_H^2-v_H > \binom{v_H}{2} \geq e_H$, and thus
\begin{equation}
\frac{e_H}{v_H} + 3/2 > \frac{e_H - 1}{v_H - 2}. \label{eq:m_m2}
\end{equation}
Therefore $m(H) > m_2(H) - 3/2 \geq k - 1$, and so we have
$$ \lceil m(G) / 2 \rceil \leq \lceil (k+1) / 2 \rceil \stackrel{(k \geq 3)}{\leq} k - 1 < m(H).$$
Breaker's win now follows from Proposition~\ref{lemma:orient_str}.

If $H$ is not very dense, then a better estimate than the one in \eqref{eq:m_m2} can be made. In particular, $e_H < v_H^2 / 4$ implies that $\frac{e_H}{v_H} + 1/2 > \frac{e_H - 1}{v_H - 2}$. Since we also assumed that $x \geq 1/2$, this implies $m(H) > m_2(H) - 1/2 \geq k$. Similarly as before we have
$$\lceil m(G) / 2 \rceil \leq \lceil (k+1) / 2 \rceil \leq k < m(H), $$
and Breaker's win again follows from Proposition~\ref{lemma:orient_str}.

To summarize, so far we have shown that Breaker has a winning strategy for the $H$-game on graph $G$ if one of the following holds,
\begin{enumerate}[(a)]
\item $0 \leq x < 1/2$,
\item $k \geq 3$, or
\item $e_H < v_H^2 / 4$.
\end{enumerate}

Let us consider a graph $H$ which does not satisfy any of the above properties. Then $e_H \geq \lceil v_H^2 / 4 \rceil$ and thus 
$$m_2(H) = \frac{e_H - 1}{v_H - 2} \geq \frac{\lceil v_H^2 / 4 \rceil - 1}{v_H - 2} \geq 2$$
for $v_H \geq 5$, and since $H$ does not satisfy (a) and (b) we have $2.5 \leq m_2(H) < 3$. Furthermore, it is easy to check that $ar(G) \leq m(G) + 1/2$, and thus $ar(G) \leq m(G) + 1/2 \leq m_2(H) + 1/2 < 4$. On the other hand, from $m_2(H) \geq 2.5$ we have
$ e_H \geq \tfrac52 v_H -4$, and thus $e_H > 2v_H - 2 $ for $v_H \geq 5$, which implies $ar(H)> 2$.
It follows now from $\lceil ar(G) / 2 \rceil \leq 2 < ar(H)$ and Proposition~\ref{lemma:arb} that Breaker has a winning strategy  in this case.

Finally, checking all graphs on $4$ vertices we see that the only strictly $2$-balanced graphs are $K_4$ and $C_4$. The case $H = K_4$ is covered by Lemma 2.1 in \cite{MS}. For $H = C_4$ we have $ar(H) = 4/3$ and $ar(G) \leq m(G) + 1/2 \leq 2$, thus Proposition~\ref{lemma:arb} implies that Breaker has a winning strategy also in this case.
\end{proof}

\section{Proof of the $0$-statement of Theorem~\ref{thm:main}}\label{sec:main3}

We need to show that with high probability Breaker has a strategy such that, when played on the random graph $\Gnp$ with $p=c n^{-1/m_2(H)}$, for $0<c=c(H)<1$ small enough, Maker's edges do not span an $H$-copy. Observe that we may assume, without loss of generality, that $H$ is strictly $2$-balanced. If not, replace $H$ by a minimal subgraph $H'$ with the same $2$-density. Clearly, if  Breaker has a strategy for winning the $H'$-game on $\Gnp$, then the same strategy prevents  Maker from obtaining an $H$-copy.

Let us first give an intuition behind the Breaker's strategy. Observe that
the expected number of copies of $H$ on any given edge is bounded by
$$
v_H^2 \cdot n^{v_H-2}\cdot p^{e_H-1} = v_H^2 \cdot c^{e_H-1}.
$$
That is, for $0<c<1$ small enough we expect that the copies of $H$ are scattered 'loosely' and that we even have many edges that are not contained in any copy of $H$. Clearly, whether such edges are claimed by Maker or Breaker is irrelevant for the outcome of the game. Assume now we find a copy of $H$ that contains two edges which are not contained in any other copy of $H$. Then Breaker can easily ensure that this $H$-copy will never be claimed by Maker: fix two such edges arbitrarily and as soon as Maker claims the first of these edges, claim the other edge. Clearly, in this way this specific $H$-copy will never be a Maker's copy. We formalize these ideas as follows.

\begin{definition}We call an edge $\emph{free}$ if it does not belong to any copy of $H$, \emph{open} if it is contained in exactly one copy of $H$ and \emph{closed} otherwise. Furthermore, we call a copy of $H$ \emph{unproblematic} if it contains at least two open edges. Otherwise we call the copy \emph{problematic}.
\end{definition}

\noindent
\textbf{Preprocessing.}
Before starting the game, Breaker preprocesses the graph $G := \Gnp$ to obtain a subgraph $\hat G$ (with some special properties that we exhibit below) and a sequence of pairwise disjoint sets of edges $S_1,\ldots,S_k$ of cardinality two each:

\begin{list}{}{}
\item $i := 0$; $k=0$;
\item $G_i:= G;$
\item {\bf while} there exists an unproblematic copy $\hat H$ of $H$ in $G_i$
\item \hspace*{0.7cm} $k\leftarrow k+1$;
\item \hspace*{0.7cm} let $S_k\leftarrow \{$  two open edges (chosen arbitrarily) of $\hat H$  $\}$;
\item \hspace*{0.7cm} $i \leftarrow i + 1$;
\item \hspace*{0.7cm} $G_i \leftarrow G_{i-1} - \{$ all open edges of $\hat H\;\}$;

\item {\bf while} there exists a free edge $e \in G_i$
\item \hspace*{0.7cm} $i \leftarrow i + 1$;
\item \hspace*{0.7cm} $G_i \leftarrow G_{i-1} - e$;
\item $\hat G \leftarrow G_i$
\end{list}

\noindent
Note that within this algorithm open, free and closed are always defined with respect to the current graph $G_i$.

\noindent
\textbf{Strategy.} Assuming that Breaker has a winning strategy for the $H$-game when played on $\hat G$, the winning strategy for the whole graph $G$ is defined as follows: 

\begin{list}{}{}
\item {\bf if} Maker claims an edge from $\hat G$
\item \hspace*{0.7cm} claim an edge from  $\hat G$ according to the winning strategy for $\hat G$;
\item {\bf else if} Maker claims an edge from a set $S_j$ for some $1\le j\le k$
\item \hspace*{0.7cm} claim the other edge from the set $S_j$;
\item {\bf else}
\item  \hspace*{0.7cm} take an arbitrary edge.
\end{list}

We first show that this strategy extends a winning strategy for $\hat G$ to a winning strategy for the whole graph.

\begin{claim} \label{claim:win}
Assuming that Breaker has a winning strategy for the $H$-game on $\hat G$, Breaker claims at least one edge from every copy of $H$ in $G$.
\end{claim}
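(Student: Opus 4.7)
The plan is to fix an arbitrary copy $H^*$ of $H$ in $G$ and show Breaker claims at least one edge of $H^*$, splitting into two cases according to whether $E(H^*)\subseteq E(\hat G)$.

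If $H^*\subseteq \hat G$, I would observe that the stated strategy simulates an honest instance of the $H$-game on $\hat G$: whenever Maker plays an edge of $\hat G$, Breaker replies inside $\hat G$ using the assumed winning strategy, while moves outside $\hat G$ by either player stay outside $\hat G$. Consequently the restriction to $\hat G$ is a valid alternating game on $\hat G$ in which Breaker follows a winning strategy, so Breaker claims at least one edge of $H^*$.

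If some edge of $H^*$ was deleted during preprocessing, let $e$ be the \emph{first} such edge, removed when passing from $G_i$ to $G_{i+1}$. Since $e$ is the first deleted edge of $H^*$, every other edge of $H^*$ still lies in $G_i$, so $H^*\subseteq G_i$ is an $H$-copy of $G_i$ through $e$. This rules out $e$ being deleted as a free edge, so $e$ must have been removed as an open edge of some unproblematic copy $\hat H$ in the first while loop. But ``open'' means $\hat H$ is the unique $H$-copy of $G_i$ through $e$, and since $H^*\subseteq G_i$ also contains $e$, we deduce $H^*=\hat H$. Therefore the set $S_k$ formed at this iteration consists of two open edges of $H^*$, so $S_k\subseteq E(H^*)$, and the pairing rule guarantees that at least one edge of $S_k$ is Breaker's: if Maker ever picks one edge of $S_k$, Breaker instantly takes the other, and if Maker picks neither then both edges end up claimed by Breaker as the game exhausts the board. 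In either subcase Breaker owns an edge of $H^*$.

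The only point needing genuine care is the uniqueness step in the second case, which leans squarely on the definition of \emph{open} as ``lying in exactly one $H$-copy of the current graph''; the rest is bookkeeping, since the pairs $S_1,\ldots,S_k$ are pairwise disjoint and disjoint from $E(\hat G)$, and the ``arbitrary'' Breaker move can always be routed outside $\hat G$ and outside the untouched pairs, so the $\hat G$-strategy replies and the pairing replies never collide.
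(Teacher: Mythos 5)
Your proof is correct and follows essentially the same two-case argument as the paper: copies of $H$ surviving into $\hat G$ are handled by the assumed winning strategy there, while a copy destroyed during preprocessing must coincide with the chosen unproblematic copy (by the uniqueness in the definition of \emph{open}), so the corresponding pair $S_k$ lies inside it and the pairing rule gives Breaker an edge. Your additional bookkeeping on the well-definedness of the combined strategy is more detail than the paper records, but the substance is identical.
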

\begin{proof}
First, consider an $H$-copy $\hat H$ which is contained in $\hat G$. Since Breaker is playing according to the winning strategy on $\hat G$, it follows that this copy has to contain at least one edge which belongs to Breaker. Secondly, consider an $H$-copy $\hat H$ which is contained in $G_i$ but not in $G_{i+1}$, for some $1 \leq i \le k$. It follows from the construction of $S_i$ that $S_i \subset \hat H$, and since Breaker claims at least one edge from $S_i$, he also claims at least one edge from $\hat H$.
\end{proof}

It remains to show that there exists a winning strategy for $\hat G$. In order to state the argument concisely, we introduce some notation.

\begin{definition}
An {\sl $H$-core} of $G$ is a maximal subgraph $G' \subseteq G$ (with respect to inclusion) that has the following two properties: every edge of $G'$ is contained in at least one copy of $H$ and every copy of $H$ in $G'$ is {problematic}.
\end{definition}

Recall that, by construction, $\hat G$ is an $H$-core. The following claim shows that it is the unique $H$-core.

\begin{claim}
There exists a unique $H$-core.
\end{claim}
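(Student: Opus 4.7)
The plan is to show that if $A$ and $B$ are any two $H$-cores of $G$, then their union $A \cup B$ also satisfies the two defining properties of an $H$-core. Since $A \subseteq A \cup B \subseteq G$ and likewise for $B$, the maximality of $A$ and $B$ then forces $A = A \cup B = B$, which is uniqueness. Existence of at least one $H$-core is immediate because $G$ is finite and the empty graph trivially satisfies both properties, so some maximal subgraph with these properties exists.

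Property (1) for $A \cup B$ is immediate: every edge of $A \cup B$ lies in $A$ or in $B$ and, by property (1) for that core, is contained in an $H$-copy inside the corresponding subgraph, which is then also an $H$-copy inside $A \cup B$. The substance of the argument lies in verifying property (2), namely that every $H$-copy in $A \cup B$ is problematic in $A \cup B$.

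The key observation is that an open edge of a copy in $A \cup B$ pins the entire copy inside a single core. Precisely, let $H'$ be an $H$-copy in $A \cup B$ and suppose $e \in H'$ is open in $A \cup B$, i.e.\ $H'$ is the unique $H$-copy of $A \cup B$ through $e$. If $e \in A$ (the case $e \in B$ is symmetric), then property (1) for $A$ yields an $H$-copy $H''$ in $A$ containing $e$. Since $H'' \subseteq A \subseteq A \cup B$ and also contains $e$, uniqueness forces $H'' = H'$, whence $H' \subseteq A$.

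To finish, either $H'$ has no open edge in $A \cup B$ and is trivially problematic, or $H' \subseteq A$ (respectively $\subseteq B$). In the latter case any edge of $H'$ open in $A \cup B$ is already open in $A$: every $H$-copy of $A$ through such an edge is an $H$-copy of $A \cup B$ through it, so there is at most one, and by property (1) there is at least one. Hence the set of edges of $H'$ open in $A \cup B$ is contained in the set of edges of $H'$ open in $A$, which has at most one element because $A$ is itself an $H$-core. I do not expect any real obstacle here; the argument is purely structural and requires no input about the random graph $G_{n,p}$ or about the specific order in which edges are removed during preprocessing.
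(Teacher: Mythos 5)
Your proof is correct and follows essentially the same route as the paper: show that the union of two $H$-cores again satisfies both defining properties, so maximality forces uniqueness. The only (cosmetic) difference is in verifying property (2): the paper argues that an unproblematic copy in the union could lie in neither core and hence would have all edges closed, whereas you argue contrapositively that a copy with an open edge must lie entirely inside one core and therefore inherits problematicity from it; both rest on the same observation that any edge of the union lies in an $H$-copy inside one of the two cores.
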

\begin{proof}
Let us assume that there exist two different $H$-cores, say $G'$ and $G''$. Then $G' \not\subset G''$ and $G'' \not\subset G'$, so $G_s = G' \cup G''$ is a proper superset of $G'$ and $G''$. Therefore, to reach a contradiction to the maximality of $G'$ and $G''$ it suffices to show that $G_s$ is an $H$-core.

First, it is easy to see that every edge of $G_s$ is contained in at least one copy of $H$. Further, observe that every $H$-copy which is problematic in $G'$ or $G''$ remains problematic in $G_s$ as well. Thus, if an $H$-copy in $G_s$ is unproblematic then it cannot be contained in $G'$ nor in $G''$. Consider such an $H$-copy $\hat H$ and consider an arbitrary edge $e\in \hat H$. Then $e$ is contained in at least one of $G'$ and $G''$  and thus, by the definition of  $G'$ and $G''$, $e$ is also contained in a copy of $H$ different from $\hat H$. Therefore $e$ is closed in $G_s$, and thus $\hat H$ is problematic implying that $G_s$ is an $H$-core.
\end{proof}

We say that a subgraph $G'$ of the $H$-core of $G$ is {\sl $H$-closed} if every copy of $H$ from the $H$-core is either contained in $G'$
or edge-disjoint with $G'$. It is easy to see that the edges of the $H$-core can be partitioned into minimal $H$-closed subgraphs where minimal is with respect to subgraph inclusion. Furthermore, as all minimal $H$-closed subgraphs are edge disjoint, Breaker can consider each such subgraph independently.

The core of our argument is the following lemma which states that with high probability every minimal $H$-closed subgraph in the $H$-core of $\Gnp$ has constant size.

\begin{lemma} \label{lemma:probabilistic}
Let $H$ be a strictly $2$-balanced graph which is not a tree or a triangle. Then there exist constants $c > 0$ and $L > 0$ such that w.h.p.\ every minimal $H$-closed subgraph of the $H$-core of $\Gnp$ has size at most $L$, provided that $p \le cn^{-1/m_2(H)}$.
\end{lemma}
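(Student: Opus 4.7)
\emph{Plan.} The plan is to combine a structural density bound on minimal $H$-closed subgraphs of the $H$-core with a first-moment argument. Specifically, I would first show that every minimal $H$-closed subgraph $G'$ of the $H$-core that contains at least two copies of $H$ has $d_2(G')$ exceeding $m_2(H)$ by a positive constant $\delta = \delta(H)$, and then use this gap to bound the expected number of such structures of any given size $\ell$ (the number of copies of $H$ in $G'$) in $\Gnp$, concluding that w.h.p.\ every minimal $H$-closed subgraph has at most $L = L(H)$ vertices.

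For the structural step, order the copies $H_1, \ldots, H_\ell$ of $H$ in $G'$ along a BFS in the copy-graph (which is connected by minimality of $G'$). Writing $\tilde J_i \subsetneq H$ for the overlap of $H_i$ ($i \geq 2$) with $H_1 \cup \cdots \cup H_{i-1}$, a telescoping identity gives
\[
e_{G'} - 1 - m_2(H)(v_{G'} - 2) \;=\; \sum_{i = 2}^\ell \bigl[(e_H - e_{\tilde J_i}) - m_2(H)(v_H - v_{\tilde J_i})\bigr].
\]
The mediant inequality combined with strict $2$-balance of $H$ implies that each summand is nonnegative, and is at least a constant $\delta_0 = \delta_0(H) > 0$ precisely when $3 \leq v_{\tilde J_i} \leq v_H - 1$ (a \emph{wide} overlap); it is zero when $\tilde J_i$ is a single edge or when $H_i$ is contained in the previous union. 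Since $H_\ell$ has no later copy to close its edges, the problematic property forces $e_{\tilde J_\ell} \geq e_H - 1 \geq 3$ (using that $H$ is neither a tree nor a triangle), so the last summand contributes at least $\delta_0$. A more careful supply-and-demand charging argument---in which each single-edge overlap copy $H_i$ demands $\geq e_H - 2$ ``forward closures'' from later copies that must appear as edges in some $\tilde J_j$ with $j > i$---then upgrades this to a linear lower bound on the number of wide overlaps in terms of $v_{G'}$, yielding $d_2(G') \geq m_2(H) + \delta$.

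For the first-moment step, one enumerates ordered $\ell$-tuples $(H_1, \ldots, H_\ell)$ of copies of $H$ in $K_n$ that can form a minimal $H$-closed subgraph; each such tuple contributes $p^{e_{G'}}$ to the expectation, weighted by the number of embeddings. A step-by-step count shows that adding a copy via a wide overlap contributes a probability factor of $O(n^{-\gamma})$ for some $\gamma = \gamma(H) > 0$ (while single-edge, contained, or initial steps contribute only constants in $c$). Combined with the structural bound that a linear fraction of the steps must be wide, the expected number of minimal $H$-closed subgraphs with $\ell$ copies is at most $n^{O(1)} \cdot c^{\Omega(\ell)} \cdot n^{-\Omega(\ell)}$, which sums to $o(1)$ over $\ell \geq \ell_0(H)$ when $c$ is small enough. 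Since $v_{G'} \leq v_H \ell$, this gives the required bound.

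The main obstacle is the supply-and-demand step: lifting the easy ``at least one wide overlap'' observation to a \emph{linear-in-$v_{G'}$} lower bound on the number of wide overlaps. Without this refinement, the absolute excess $\delta_0$ translates only to a $d_2$-excess of $O(1/v_{G'})$, which is insufficient to force the required exponential decay in the first-moment count.
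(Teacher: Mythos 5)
Your structural step is essentially sound and, perhaps reassuringly, is not where the real difficulty lies: the supply--demand count you sketch does work. Each single-edge-overlap copy $H_i$ contributes $e_H-1$ new edges, at least $e_H-2$ of which must be closed by later copies because $H_i$ is problematic, while each later copy supplies at most one closing edge if its overlap is a single edge and at most $e_H$ if the overlap is wide; this yields $\deg(\ell)\ge \tfrac{e_H-3}{e_H}\,\mathrm{reg}(\ell)$, a positive linear fraction exactly when $e_H\ge 4$, i.e.\ when $H$ is neither a tree nor a triangle. (You do need Lemma~\ref{lemma:Hcopies} here, to rule out ``accidental'' copies of $H$ inside $\bigcup_{k\le i}H_k$ that close new edges of $H_i$ without ever appearing in a later overlap $\tilde J_j$.) This is precisely the content of the paper's Claim~\ref{claim:counting} combined with the observation that $f_o$ must vanish at termination, and together with your telescoping identity it does give $d_2(G')\ge m_2(H)+\delta$.

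The genuine gap is in the first-moment step. Your bound $n^{O(1)}\cdot c^{\Omega(\ell)}\cdot n^{-\Omega(\ell)}$ silently discards the entropy of the enumeration: at step $i$ one must choose where the new copy attaches, which costs at least $\Omega(i)$ choices for a single-edge overlap (any of the $\Theta(i)$ edges built so far) and $(i\,v_H)^{v_J}$ for a wide one, so the total count carries a factor of order $\ell^{\Theta(\ell)}$. This is dominated by $n^{-\Omega(\ell)}$ only while $\ell\le n^{\varepsilon}$ for some small $\varepsilon=\varepsilon(H)$, whereas a minimal $H$-closed subgraph can a priori contain polynomially many copies; the density gap cannot rescue you, since the number of candidate graphs on $v$ vertices already exceeds $v^{\Theta(v)}$. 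Nor can you simply truncate the generating sequence at a convenient length: a prefix need not satisfy the problematic-copy condition, so it need not be dense (a long chain of single-edge attachments is a legitimate prefix). The paper closes this hole with two devices absent from your plan: (i) a fixed ordering $\omega$ of the edges which makes the attachment edge of a regular step \emph{deterministic} whenever a fully-open copy is present, so that such a step costs entropy $O(1)$ and probability factor $2e_H n^{v_H-2}p^{e_H-1}<\tfrac12$; and (ii) a truncation of the exploration at $\Theta(\log n)$ steps, after which Claim~\ref{claim:counting} guarantees that either the process stopped at constant size, or $\xi+1$ degenerate steps occurred (probability roughly $n^{-\alpha\xi}$, beating the polylogarithmic entropy of those few steps), or $\Theta(\log n)$ deterministic regular steps occurred (probability $2^{-\Theta(\log n)}$, beating the $n^{O(1)}$ choices for the initial copy). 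Some version of this bounded-entropy exploration is needed to make your union bound converge.
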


Before we prove Lemma~\ref{lemma:probabilistic}, we first show how it implies the $0$-statement of Theorem~\ref{thm:main}.

\begin{proof}[Proof of the $0$-statement of Theorem~\ref{thm:main}]
Let $G := \Gnp$, and let Breaker play as described. Recall that, by Claim~\ref{claim:win}, it suffices to show that there exists a winning strategy for the $H$-core $\hat G$ of $G$. Furthermore, by the definition of $H$-closed subgraphs, we only have to find a winning strategy for all minimal $H$-closed subgraphs of the $H$-core.

From Lemma~\ref{lemma:probabilistic} we know that w.h.p.\ the graph $G$ is such that all minimal $H$-closed subgraphs have size at most $L=L(H)$. From Lemma~\ref{lemma:probabilistic2} we know that  w.h.p.\ the graph $G$ is such that this implies that all minimal $H$-closed subgraphs have density at most $m_2(H)$. Theorem~\ref{thm:deterministic} thus implies that there exists a winning  strategy for  Breaker for all minimal $H$-closed subgraphs -- and thus also for the $H$-core $\hat G$, which together with Claim~\ref{claim:win} finishes the proof.
\end{proof}

It remains to prove Lemma~\ref{lemma:probabilistic}. We do this in the remainder of this section.

%
Actually, our proof of Lemma~\ref{lemma:probabilistic} follows the proof of Lemma~6 from~\cite{NS13}. The main difference is that in~\cite{NS13} a problematic copy of $H$ was defined as a copy of $H$ in which {\em all} edges are contained in two copies of $H$, while the definition in this paper allows the existence of one (but only one) edge that may be open. As we shall see, this difference in definition is responsible for the fact that the proof goes through for triangles in~\cite{NS13}, but does not here. Of course, this is no coincidence: for the Random Ramsey result that was considered in~\cite{NS13} the threshold for triangles is $p=n^{-1/m_2(K_3)}=n^{-1/2}$~\cite{LRV92}, while for the Maker-Breaker game considered in this paper the threshold for triangles is $n^{-5/9}$~\cite{SS05}.
In the following we repeat the main arguments from~\cite{NS13}, for the convenience of the reader. 

We define a process that generates $H$-closed structures iteratively starting from a single copy of $H$. Assume that we have fixed an (arbitrary) total ordering $\omega$ of the edges of $\Gnp$, and let $G'$ be a minimal $H$-closed subgraph of the $H$-core of $\Gnp$. Then $G'$ can be generated by starting with an arbitrary $H$-copy in $G'$ and repeatedly attaching $H$-copies to the graph constructed so far, as described in the following procedure.

\begin{list}{}{}
\item Let $H_0$ be an $H$-copy in $G'$,
\item $k \leftarrow 0$; $\hat G \leftarrow H_0$;
\item {\bf while} $\hat G \not = G'$ {\bf do}
\item \hspace*{0.7cm}$k \leftarrow k+1$;
\item \hspace*{0.7cm}{\bf if} $\hat G$ contains a copy of $H$ that is unproblematic in $\hat G$ {\bf then}
\item \hspace*{1.4cm}let $\ell < k$ be the smallest index such that $H_{\ell}$ is
\item \hspace*{2.1cm} a copy of $H$ that is unproblematic in $\hat G$;
\item \hspace*{1.4cm}let $e$ be the $\omega$-minimum edge in $H_{\ell}$ which
\item \hspace*{2.1cm} is open in $\hat G$ and closed in $G'$;
\item \hspace*{1.4cm}let $H_{k}$ be an $H$-copy in $G'$ that contains $e$ but is
\item \hspace*{2.1cm} not contained in $\hat G$;
\item \hspace*{0.7cm}{\bf else}
\item \hspace*{1.4cm}let $H_{k}$ be an $H$-copy in $G'$ that is not contained
\item \hspace*{2.1cm} in $\hat G$ and intersects $\hat G$ in at least one edge;
\item \hspace*{0.7cm}$\hat G\leftarrow \hat G \cup H_{k}$;
\end{list}

In order to show that w.h.p.\ the highest value the parameter $k$ reaches is bounded by a constant, we first collect some properties of this process. Consider the $H$-copy $H_{i}$. We distinguish two cases: a) if $H_{i}$ intersects $\bigcup_{j< i}H_j$ in {\em exactly} one edge, we call this a \emph{regular} copy, and b) if $H_{i}$ intersects $\bigcup_{j< i}H_j$ in some subgraph $D$ with $v_D \ge 3$, we call this a \emph{degenerate} copy. Let us denote with $\reg(\ell)$ and $\deg(\ell)$ the number of $H$-copies $H_{i}$, $1\le i \leq \ell$, which are regular, resp. degenerate. Furthermore, for $0 \leq i \leq \ell$ we say that the copy $H_{i}$ is \emph{fully-open} at time $\ell$ if $H_{i}$ is a regular copy and no new vertex of $H_i$, i.e.,\ no vertex of $V(H_i)\setminus (\bigcup_{j < i} V(H_{j}))$, is touched by any of the copies $H_{i+1}, \ldots, H_\ell$. Let us denote with $f_o(\ell)$ the number of fully-open copies at time $\ell$. The following lemma implies that every fully-open copy at time $\ell$ contains exactly $e_H - 1$ open edges. 

\begin{lemma}[Lemma $8$ in \cite{NS13}] \label{lemma:Hcopies}
Let $H$ be strictly $2$-balanced, let $G$ be an arbitrary graph and let $h_e$ be an edge of $G$. Construct a graph $G_H$ by attaching $H$ to an edge $h_e$. Then $G_H$ has the property that if $\hat H$ is an $H$-copy in $G_H$ that contains at least one vertex from $H$ that is not incident with $h_e$, then $\hat H = H$.
\end{lemma}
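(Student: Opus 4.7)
My plan is to argue by contradiction: assume $\hat H \neq H$ (where $H$ denotes the attached copy inside $G_H$) and use the strict $2$-balancedness of $H$ to derive a contradiction. Since the attached $H$ and $G$ share only the two vertices $\{u,v\} := V(h_e)$, there are no edges of $G_H$ between $V(H) \setminus \{u,v\}$ and $V(G) \setminus \{u,v\}$. Setting $V_1 := V(\hat H) \cap V(H)$ and $V_2 := V(\hat H) \cap V(G)$, it follows that every edge of $\hat H$ is either entirely within $V_1$ or entirely within $V_2$, and $\hat H$ decomposes as $H_1 \cup H_2$ with $H_1 := \hat H[V_1] \subseteq H$ (the attached copy) and $H_2 := \hat H[V_2] \subseteq G$, overlapping in at most the edge $h_e$. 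Write $c := |V_1 \cap V_2| \in \{0,1,2\}$, $b := |V_1| - c$, $a := |V_2| - c$, and $\epsilon := 1$ if $h_e \in E(\hat H)$ and $0$ otherwise. By hypothesis $b \geq 1$, and for contradiction I assume $a \geq 1$.

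The heart of the argument is that $H_1$ (as a subgraph of the attached $H$) and $H_2$ (via $\hat H \cong H$) both correspond to proper subgraphs of $H$. Assuming $c = 2$, so that $v_{H_1} = b+2$ and $v_{H_2} = a+2$ are both at least $3$, strict $2$-balancedness yields $e_{H_1} < d_2(H) b + 1$ and $e_{H_2} < d_2(H) a + 1$; summing and using $e_{H_1} + e_{H_2} = e_H + \epsilon$ and $a + b = v_H - 2$ gives $e_H + \epsilon < d_2(H)(v_H - 2) + 2 = e_H + 1$. For $\epsilon = 1$ this is already the contradiction $e_H + 1 < e_H + 1$. When $\epsilon = 0$ the naive bound collapses to the vacuous $e_H < e_H + 1$, and the additional observation I need is that $h_e \notin E(H_1)$ in this case; then $H_1 + h_e$ is still a proper subgraph of the attached $H$ on at least three vertices, and strict $2$-balancedness sharpens to $e_{H_1} < d_2(H) b$. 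Re-summing yields $e_H = e_{H_1} + e_{H_2} < d_2(H)(a+b) + 1 = e_H$, a contradiction.

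The main obstacle, justifying that $c = 2$ (and hence $v_{H_1}, v_{H_2} \geq 3$), is resolved by first showing that $H$ itself is $2$-vertex-connected. Since $\hat H \cong H$ would then be $2$-connected, and $\{u,v\} \cap V(\hat H)$ separates the nonempty sets $V_1 \setminus \{u,v\}$ and $V_2 \setminus \{u,v\}$ in $\hat H$, this separator must have size at least $2$, forcing $c = 2$. The $2$-connectedness of $H$ itself follows from an auxiliary density computation: if $H$ had a cut vertex $w$ with $H = H^1 \cup H^2$ and $V(H^1) \cap V(H^2) = \{w\}$, then applying strict $2$-balancedness to the two sides (or, when one side is a single pendant edge at $w$, only to the other side) and summing the resulting inequalities produces $e_H < v_H - 1$, contradicting $e_H \geq v_H$ — which holds because $H$ is connected and, being strictly $2$-balanced with $v_H \geq 3$, is not a tree and hence contains a cycle.
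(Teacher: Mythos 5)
Your main argument is correct and is essentially the proof from \cite{NS13}: split $\hat H$ along the gluing pair $\{u,v\}$ into $H_1\subseteq H$ and $H_2\subseteq G$, apply strict $2$-balancedness to both pieces (each is a proper subgraph of a copy of $H$ on at least three vertices once $c=2$ is known), and in the case $h_e\notin E(\hat H)$ recover the lost unit by applying the density bound to $H_1+h_e$ instead of $H_1$. The reduction to $c=2$ via $2$-connectivity of $H$ is also the standard route, and your separator argument inside $\hat H$ is fine.

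The one genuine flaw is the justification that $H$ contains a cycle. It is \emph{not} true that a strictly $2$-balanced graph on at least three vertices cannot be a tree: $P_3$, the path with two edges, has $d_2(P_3)=1$ while every proper subgraph on three vertices has $d_2\le 0$, so it is strictly $2$-balanced by the definition used here — and for $P_3$ the conclusion of the lemma actually fails (attach $P_3$ to $h_e=uv$ by a pendant vertex $w$ at $v$; any further neighbour $z$ of $v$ in $G$ gives a copy $z$--$v$--$w$ through $w$ that is not the attached copy). So "$H$ is not a forest" cannot be derived from strict $2$-balancedness; it must be imported from the context in which the lemma is used (Lemma~\ref{lemma:probabilistic} assumes $H$ is not a tree or a triangle, whence $m_2(H)>1$ and $e_H\ge v_H$). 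With that hypothesis your cut-vertex computation goes through, but note that $2$-connectedness also includes connectedness, so you should additionally dispose of $H=H^1\cup H^2$ with $V(H^1)\cap V(H^2)=\emptyset$; the same summation does this, with $v_H-4$ in place of $v_H-3$, yielding an even stronger contradiction.
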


For $\ell \geq 1$, let
$$\Delta(\ell) := |\{ i < \ell \; : \; H_i\; \text{fully-open at time}\; \ell - 1, \; \text{ but not at time }\; \ell\}|.$$
Clearly, $\Delta(\ell) \leq 1$ if $H_\ell$ is a regular copy, and $\Delta(\ell) \leq v_H - 1$ if $H_\ell$ is a degenerate copy. The following claim is from~\cite{NS13} (Claim 10); the only difference is that we here have $e_H-3$ while in~\cite{NS13} we had $e_H-2$. (This difference comes from the fact the we now allow one open edge.)

\begin{claim}\label{claim:consec_reg}
For any sequence $H_i, \ldots, H_{i + e_H - 3}$ of consecutive regular copies such that $\Delta(i) = 1$ we have $\Delta(i+1) = \ldots = \Delta(i + e_H - 3) = 0$.\qed
\end{claim}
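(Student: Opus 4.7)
The plan is to first extract the structure forced by $\Delta(i) = 1$. There is a unique fully-open copy $H_{m^*}$ (with $m^* < i$) losing its status at step $i$, so some new vertex $v$ of $H_{m^*}$ lies in $V(H_i)$. Since $H_i$ is regular, $v$ must be an endpoint of its attaching edge, and I would argue that this attaching edge must lie inside $H_{m^*}$ itself: if it lay in some earlier copy $H_{m'}$ with $m' \neq m^*$, then $v \in V(H_{m'})$, contradicting either the definition of a new vertex of $H_{m^*}$ (if $m' < m^*$) or the fact that $H_{m^*}$ is still fully-open at time $i - 1$ (if $m' > m^*$). Combined with the greedy rule of the process, this forces $m^*$ to be the smallest unproblematic index in $\hat G_{i-1}$; in particular, every $H_j$ with $j < m^*$ is problematic at time $i - 1$ and, since open-edge counts are monotonically non-increasing, remains so thereafter.

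Next I would apply Lemma~\ref{lemma:Hcopies}: because $H$ is strictly $2$-balanced, attaching $H_i$ along a single edge creates no new $H$-copy in $\hat G_i$ other than $H_i$ itself. Consequently, the only edge of $H_{m^*}$ whose status flips is the one used as the attaching edge of $H_i$, and $H_{m^*}$'s open-edge count drops from $e_H - 1$ to exactly $e_H - 2$; all other copies' open counts are unaffected. Hence $H_{m^*}$ remains the smallest unproblematic index at time $i$.

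The remainder is a short induction on $j = 1, 2, \ldots, e_H - 3$ with invariant: at step $i + j$, $H_{m^*}$ is still the smallest unproblematic index and has exactly $e_H - 1 - j$ open edges in $\hat G_{i + j - 1}$. Thus the process chooses $H_{m^*}$ at step $i + j$, and the attaching edge of the resulting regular copy $H_{i+j}$ lies in $H_{m^*}$, with both endpoints in $V(H_{m^*})$. The crux is to show that neither endpoint is a new vertex of any currently fully-open copy: a new vertex of $H_{m^*}$ is new to no other copy in the sequence (and $H_{m^*}$ itself is already non-fully-open), while any ``old'' vertex of $H_{m^*}$ belongs to some earlier $H_m$ with $m < m^*$ which cannot be fully-open at time $i + j - 1$ because $H_{m^*}$ already touched that vertex. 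Hence $\Delta(i + j) = 0$, and Lemma~\ref{lemma:Hcopies} again ensures that only the edge just used becomes closed, so the open count of $H_{m^*}$ drops by exactly one, sustaining the invariant as long as $e_H - 2 - j \geq 2$, i.e.\ up to $j = e_H - 3$. The main obstacle to making this rigorous is precisely this ``uniqueness'' combinatorial fact about new vertices; once it is in place, the numerical conclusion follows from $e_H - 2 - (e_H - 3) = 1$, meaning $H_{m^*}$ has just enough open-edge budget for exactly $e_H - 3$ further regular extensions before itself becoming problematic.
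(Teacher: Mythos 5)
Your argument is correct and is essentially the intended one (the paper defers this claim to Claim~10 of~\cite{NS13}): you correctly identify that $\Delta(i)=1$ forces the copy $H_{m^*}$ losing fully-open status to coincide with the smallest unproblematic index to which the greedy rule attaches, use Lemma~\ref{lemma:Hcopies} to show each subsequent regular attachment closes exactly one further open edge of $H_{m^*}$ and creates no other status changes, and observe that new vertices of earlier or later copies cannot coincide with the endpoints of these attaching edges, so $\Delta$ stays $0$ until $H_{m^*}$ becomes problematic. The only blemish is the condition ``$e_H-2-j\ge 2$'', which by your own invariant should read $e_H-1-j\ge 2$ (the number of open edges of $H_{m^*}$ in $\hat G_{i+j-1}$); this off-by-one does not affect the stated conclusion $j\le e_H-3$.
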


Similarly, the next claim is proven exactly as Claim 11 in~\cite{NS13}, with $e_H-1$ (there) replaced by $e_H-2$ (here).

\begin{claim}\label{claim:counting}
For every $\ell \geq 1$, assuming the process does not stop before adding the $\ell$-th copy, we have
$$ f_o(\ell) \geq \reg(\ell)\left(1 - \frac{1}{e_H - 2}\right) - \deg(\ell) \cdot v_H. $$
\qed
\end{claim}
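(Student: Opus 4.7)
The approach is to rewrite $f_o(\ell)$ in terms of $\reg(\ell)$ and the cumulative spoilings $\sum_{j=1}^{\ell} \Delta(j)$, and then bound this sum by splitting according to whether $H_j$ is regular or degenerate.

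The first step is the identity
\[ f_o(\ell) \;=\; \reg(\ell) \;-\; \sum_{j=1}^{\ell} \Delta(j). \]
Every regular copy $H_i$ is fully-open immediately after being added (vacuously, since no later copy exists yet) and can transition from fully-open to not fully-open at most once: precisely at the unique later step $j > i$ (if any) at which some $H_j$ first touches a new vertex of $H_i$. That transition contributes exactly $+1$ to $\Delta(j)$, so each regular copy is either still fully-open at time $\ell$ or else counted by exactly one term in the sum. It therefore suffices to show
\[ \sum_{j=1}^{\ell} \Delta(j) \;\le\; \frac{\reg(\ell)}{e_H - 2} + \deg(\ell)\cdot v_H. \]

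For the degenerate indices, the bound $\Delta(j) \le v_H - 1$ stated just before Claim~\ref{claim:consec_reg} immediately yields a total contribution of at most $\deg(\ell)(v_H - 1)$. For the regular indices, partition them into $m$ maximal runs of consecutive regular copies, of lengths $r_1,\ldots,r_m$ with $\sum_i r_i = \reg(\ell)$ and $m \le \deg(\ell)+1$, since successive runs are separated by degenerate steps. Claim~\ref{claim:consec_reg} says that within any single run the positions $j$ with $\Delta(j)=1$ must be pairwise at distance $\ge e_H-2$, so a run of length $r_i$ contributes at most $\lceil r_i/(e_H-2)\rceil \le r_i/(e_H-2) + 1$. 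Summing over runs yields a regular contribution of at most $\reg(\ell)/(e_H-2) + m \le \reg(\ell)/(e_H-2) + \deg(\ell) + 1$. Combining with the degenerate bound and absorbing the additive slack $\deg(\ell)+1$ into $\deg(\ell)\cdot v_H$ (using that $H$ is not a tree or triangle, so $v_H \ge 4$, and that $\Delta(1)=0$ since $H_0$ is not regular) gives the desired inequality.

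The main obstacle is arguably just this last piece of bookkeeping, namely making sure the ceiling losses and the ``$+1$ per run'' are absorbed cleanly into the $\deg(\ell)\cdot v_H$ term. Conceptually, once the identity $f_o(\ell) = \reg(\ell) - \sum_j \Delta(j)$ is in place, the rest is an averaging argument over the regular positions driven by Claim~\ref{claim:consec_reg}, with the degenerate copies contributing only a bounded $O(v_H)$ overhead per occurrence.
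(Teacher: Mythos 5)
Your skeleton is the right one, and it matches the argument this claim is imported from (the paper itself gives no proof here but defers to Claim 11 of \cite{NS13}): the identity $f_o(\ell)=\reg(\ell)-\sum_{j=1}^{\ell}\Delta(j)$ is correct, as is the split of the sum into degenerate indices, contributing at most $\deg(\ell)(v_H-1)$, and maximal runs of regular indices, a run of length $r_i$ contributing at most $\lceil r_i/(e_H-2)\rceil$ by Claim~\ref{claim:consec_reg}. The gap is in the final absorption. Your bound is $\sum_j\Delta(j)\le \reg(\ell)/(e_H-2)+m+\deg(\ell)(v_H-1)$ with $m\le\deg(\ell)+1$ runs, which can exceed the target $\reg(\ell)/(e_H-2)+\deg(\ell)\cdot v_H$ by $1$. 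The coefficient $v_H$ gives exactly one unit of slack per degenerate index, which pays for the ``$+1$'' of every run that is \emph{preceded} by a degenerate index, i.e.\ every run except possibly the one starting at index $1$; that first run's ``$+1$'' is left uncovered, and neither of the facts you invoke covers it. The observation $\Delta(1)=0$ only improves the first run's count to $\lceil (r_1-1)/(e_H-2)\rceil$, which still exceeds $r_1/(e_H-2)$ once $e_H\ge 5$: for instance with $\deg(\ell)=0$, $\ell=r_1=2$ and $e_H=5$, your accounting permits $\Delta(2)=1$, hence $f_o(2)=1$, while the claim asserts $f_o(2)\ge 2-2/3=4/3$. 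So for $e_H\ge5$ your argument does not prove the stated inequality (it proves it with an extra additive $-1$, which would in fact suffice for the application via \eqref{eq:steps}, but is not what is claimed).

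What is missing is a fact about the process that does not follow from Claim~\ref{claim:consec_reg} and the bounds on $\Delta$ alone: no $\Delta=1$ can occur among the first $e_H-1$ steps if they are all regular. As long as $H_0$ is unproblematic in $\hat G$ it is the smallest unproblematic index, so the algorithm attaches $H_k$ to an open edge of $H_0$; the endpoints of that edge lie in $V(H_0)$ and are new vertices of no regular copy, hence $\Delta(k)=0$. Moreover, by Lemma~\ref{lemma:Hcopies} each such regular attachment closes exactly one edge of $H_0$, so $H_0$ remains unproblematic for the first $e_H-2$ such steps and is still the copy being processed at step $e_H-1$. Consequently the $\Delta=1$ positions of the first run are confined to $[e_H,r_1]$, and that run contributes at most $\lceil (r_1-e_H+1)/(e_H-2)\rceil\le (r_1-2)/(e_H-2)<r_1/(e_H-2)$, so its ``$+1$'' disappears. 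With this additional observation your bookkeeping closes.
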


Observe that this bound on $f_o(\ell)$ is only meaningful if $e_H \geq 4$. This
is the reason why the proof does not go through for the case of triangles.

If $f_o(\ell) > 0$ for some $\ell \geq 1$, then $H_\ell$ cannot be the last copy in the process, as there exists at least one $H$-copy with at least $e_H - 1 \geq 2$ open edges, which cannot be by the definition of the $H$-core. Furthermore, from Claim~\ref{claim:counting} we have that after adding $L$ copies, out of which at most $\xi$ are degenerate, there are still at least
\begin{equation} \label{eq:steps}
(L-\xi)(1 - 1/(e_H - 2)) - \xi \cdot v_F
\end{equation}
fully-open copies at time $L$.

In a first moment calculation we have to multiply the number of choices for $H_{\ell}$ with the probability that the chosen $H$-copy is in $\Gnp$. For a regular copy where $H_\ell$ is attached to an open edge, the open edge to which it is attached is given deterministically by the design of our algorithm, provided that $f_o(\ell) > 0$. We just have to choose the edge (and orientation) in the new copy that we attach to it. Thus, this term is bounded by
\begin{equation}\label{eq:rr:steps:e1}
2e_H \cdot n^{v_H-2} \cdot p^{e_H-1} \le 2e_H \cdot c < \tfrac12,
\end{equation}
for $0<c < 1/(4e_H)$. 
For a regular copy $H_\ell$ that is either attached to a closed edge or to an open edge and $f_o(\ell) = 0$, the edge to which we attach the regular copy is not given deterministically so we have to choose two vertices to which we attach $H_\ell$, which we can do in at most $(\ell\cdot v_H)^2$ ways.

To bound the term for degenerate copies one first easily checks (see~\cite{NS13}) that 
there exists an $\alpha > 0$ such that
$$
(v_H-v_J) - \tfrac{e_H-e_J}{m_2(H)} < -\alpha,\qquad\text{for all $J\subsetneq H$ with $v_J\ge 3$. }
$$
Thus, we can bound the case that the copy $H_\ell$ is a degenerate copy by
\begin{equation}\label{eq:rr:steps:e2}
\sum_{J \subsetneq H, v_J \geq 3} (\ell\cdot v_H)^{v_J}\cdot n^{v_H-v_J} \cdot p^{e_H-e_J} < (\ell\cdot v_H \cdot 2^{e_H})^{v_H}\cdot  n^{-\alpha},
\end{equation}
with room to spare.


With these preparations at hand we can now finish the proof exactly as in~\cite{NS13} by a  union bound argument, choosing $\xi$ such that $\xi\cdot \alpha > v_H +1$ and $L$ such that the term in (\refeq{eq:steps}) is positive. Informally, in \cite{NS13} it is shown that there are w.h.p.\ at most $\xi$ degenerate steps within the first $\Theta(\log n)$ steps. Furthermore, if the process doesn't stop before the $L$-th step then the term in (\refeq{eq:steps}) stays positive until at least $(\xi+1)$ degenerate steps occur, and by the previous observation this doesn't happen before the $\Theta(\log n)$-th step. Finally, we show that w.h.p.\ the process cannot run for $\Theta(\log n)$ steps. We skip the details.
%
%

\section{Proof of Theorem~\ref{thm:k3_extension}}\label{sec:add}

In the following proof we use $M$ to denote Maker's graph.

\begin{proof}[Proof of Theorem~\ref{thm:k3_extension}]
If $m_2(H) \geq 2$, then  $H_P$ satisfies the condition of Theorem~\ref{thm:main}, and the conclusion of the theorem trivially follows. Therefore, we can assume that $m_2(H) < 2$.

Assume $p \ll n^{-t}$. If $t = 5/9$, then by Theorem~\ref{thm:K_3} Breaker can prevent Maker from creating a copy of $K_3$, and if $t = 1/m_2(H)<\frac{5}{9}$, then by Theorem~\ref{thm:main} Breaker can prevent Maker from creating a copy of $H$. In any case, there exists a subgraph of $H_P$ which Maker cannot create, thus Breaker wins in the $H_P$-game.

So, let now $p \gg n^{-t}$ and let $G := \Gnp$ be such that it satisfies the property given in Lemma~\ref{lemma:gnp_expand} with $\varepsilon = 1/2$. As $t>\frac{1}{2}$, without loss of generality we can add a technical assumption that $p \ll n^{-1/2}$. We split the strategy of Maker into several phases.

\smallskip

\noindent
\textbf{Phase 1.} Since $m(K_5^-) = 5/9$ (where $K_5^-$ is a complete graph on $5$ vertices with one arbitrary edge removed), $G$ contains w.h.p.\ a copy of $K_5^-$. Denote with $\hat K$ one such copy. It is not hard to check that playing only on the edges of $\hat K$, Maker can create a copy of $K_3$ in at most $4$ moves~\cite{SS05}. Let $K = \{v_1, v_2, v_3\}$ be the vertices of the obtained $K_3$-copy.

\noindent
\textbf{Phase 2.} It follows from Lemma~\ref{lemma:gnp_expand} that w.h.p.\ every vertex has at least $np/2 \gg 1 / (np^2)$ incident edges in $G$. Thus, in the next $8 / np^2$ rounds Maker can claim edges such that the set $N_1 = N_M(v_1) \setminus K$ has size  $8 / np^2$.

\noindent
\textbf{Phase 3.} Again, from Lemma~\ref{lemma:gnp_expand} and $1/p \gg |N_1|$ we have that w.h.p.\ $|N_{G}(N_1)| \ge \tfrac12 |N_1| np \ge 4/p$ and thus, with room to spare,  $|N_{G}(N_1) \setminus K| \geq 3 / p$. Therefore, regardless of Breaker's moves so far, in the next $1/p$ rounds Maker can claim edges such that the set $N_2 = N_M(N_1) \setminus K$ is of size $1/p$.

\noindent
\textbf{Phase 4.} It again follows from Lemma~\ref{lemma:gnp_expand} that $|N_{G}(N_2)| \geq \frac12 |N_2| np \geq n/2$ w.h.p. Again, regardless of Breaker's moves, in the next $n/6$ rounds Maker can easily claim edges such that the set $N_3 = N_M(N_2) \setminus (N_1 \cup K)$ is of size $n/6$.

\noindent
\textbf{Phase 5.} Maker creates a copy of $H$ in the induced subgraph $G[N_3]$.
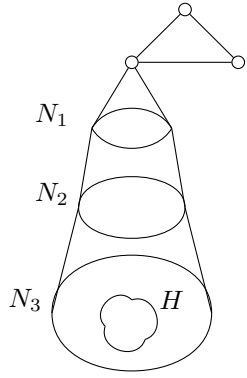
\begin{figure}[h!]
  \centering
  \tikzstyle{node}=[circle,fill=White,draw=Black,scale=0.5]
\tikzstyle{none}=[inner sep=0pt]
\tikzstyle{neutral}=[-]

\begin{tikzpicture}[scale=0.7]
	\begin{pgfonlayer}{nodelayer}
		\node [style=node] (0) at (0, 3) {};
		\node [style=node] (1) at (-1, 2) {};
		\node [style=node] (2) at (1, 2) {};
		\node [style=none] (3) at (-1.75, 0.75) {};
		\node [style=none] (4) at (-0.25, 0.75) {};
		\node [style=none] (5) at (-2, -0.75) {};
		\node [style=none] (6) at (0, -0.75) {};
		\node [style=none] (7) at (-2.5, -2.75) {};
		\node [style=none] (8) at (0.5, -2.75) {};
		\node [style=none] (9) at (-1, -2.5) {};
		\node [style=none] (10) at (-1.5, -3) {};
		\node [style=none] (11) at (-0.75, -3.25) {};
		\node [style=none] (12) at (-2.5, 1) {$N_1$};
		\node [style=none] (13) at (-2.5, -0.5) {$N_2$};
		\node [style=none] (14) at (-3, -2.5) {$N_3$};
		\node [style=none] (15) at (-0.25, -2.5) {$H$};
	\end{pgfonlayer}
	\begin{pgfonlayer}{edgelayer}
		\draw [style=neutral] (1) to (0);
		\draw [style=neutral] (0) to (2);
		\draw [style=neutral] (2) to (1);
		\draw [style=neutral, bend right=60] (3.center) to (4.center);
		\draw [style=neutral, bend left=60] (3.center) to (4.center);
		\draw [style=neutral] (1) to (3.center);
		\draw [style=neutral] (1) to (4.center);
		\draw [style=neutral, bend right=90] (5.center) to (6.center);
		\draw [style=neutral, bend left=90] (5.center) to (6.center);
		\draw [style=neutral] (3.center) to (5.center);
		\draw [style=neutral] (4.center) to (6.center);
		\draw [style=neutral, in=270, out=-90, looseness=1.25] (7.center) to (8.center);
		\draw [style=neutral, bend left=90, looseness=1.25] (7.center) to (8.center);
		\draw [style=neutral] (5.center) to (7.center);
		\draw [style=neutral] (6.center) to (8.center);
		\draw [style=neutral, bend left=90, looseness=1.50] (9.center) to (11.center);
		\draw [style=neutral, bend left=90, looseness=1.50] (11.center) to (10.center);
		\draw [style=neutral, bend left=90, looseness=1.50] (10.center) to (9.center);
	\end{pgfonlayer}
\end{tikzpicture}
  \caption{Evolution of Maker's graph in $H_p$-game.} \label{fig:H_p_proof}
\end{figure}

\smallskip

It remains to show that the last step (Phase 5) is indeed w.h.p.\ possible. First, observe that until this phase, only $o(n^2 p)$ rounds have been played. In other words, assuming that $n$ is sufficiently large, we know that less than $\tfrac{\gamma}{6^2} \cdot n^2 p$ rounds have been played to this point, where $\gamma$ is the constant given by Theorem~\ref{thm:resil}. On the other hand, it follows by a union bound that statement of Theorem~\ref{thm:resil} holds w.h.p.\ for every induced subgraph of $G$ on $n/6$ vertices,
\begin{gather*}
\Pr[\exists S \subseteq V(G) \; : \; |S| = n / 6, \; G[S] \; \text{does not satisfy Theorem~\ref{thm:resil}}\;] \leq \\
\leq \binom{n}{n/6} \cdot e^{- \Theta(n^2 p)} \leq e^{n - \Theta(n^2 p)} = o(1).
\end{gather*}
Therefore, we can assume that $G[N_3]$ satisfies the statement of Theorem~\ref{thm:resil}. Let $R \subset E(G)$ the set of Breaker's edges, and by previous observation we have $|R| \leq \tfrac{\gamma}{6^2} \cdot n^2 p$. Therefore, Maker can create a copy of $H$ in $G[N_3] \setminus R$, and by construction of set $N_3$ any such copy of $H$ closes a copy of $H_P$ in Maker's graph, see Figure~\ref{fig:H_p_proof}. This completes the proof of Theorem~\ref{thm:k3_extension}.
\end{proof}

We close this section by mentioning that the phenomena of Theorem~\ref{thm:k3_extension} do hold for  $2$-connected graphs as well. For example, if we connect two vertices of the triangle by a path, then the threshold of the resulting graph will also depend on the length of this path. Let $C_3^+$ and $C_6^+$ be as defined in Figures~\ref{fig:C_3} and~\ref{fig:C_6}.

Adapting the proof of the $0$-statement of Theorem~\ref{thm:main} one can show that Breaker wins the $C_3^+$-game on $G_{n,p}$ whenever $p \le n^{-1/2-\varepsilon}$ for some $\varepsilon >0$. In addition, it follows from Theorem~\ref{thm:resil} that there exists a positive constant $C$ such that w.h.p.\ Maker has a winning strategy in the $C_3^+$-game, provided that $p \geq Cn^{-1/m_2(C_3^+)} = Cn^{-1/2}$.

For $C_6^+$ it follows from Theorem~\ref{thm:K_3} that Breaker can prevent Maker from obtaining a copy of $K_3$ (and thus of $C_6^+$ as well), whenever $p \ll n^{-5/9}$. On the other hand, adapting the ideas of the proof of Theorem~\ref{thm:k3_extension} one can show that  for $p \gg n^{-5/9}$ Maker has a winning strategy.


\begin{figure}[h!]
  \centering
  \begin{minipage}[t]{0.4\textwidth}
    \centering \tikzstyle{node}=[circle,fill=White,draw=Black,scale=0.5]
\tikzstyle{neutral}=[-]

\begin{tikzpicture}
	\begin{pgfonlayer}{nodelayer}
		\node [style=node] (0) at (0, 0.75) {};
		\node [style=node] (1) at (-1.25, -0.25) {};
		\node [style=node] (2) at (0, -1.25) {};
		\node [style=node] (3) at (1.25, -0.25) {};		
	\end{pgfonlayer}
	\begin{pgfonlayer}{edgelayer}
		\draw [style=neutral] (1) to (0);
		\draw [style=neutral] (0) to (3);
		\draw [style=neutral] (2) to (1);
		\draw [style=neutral] (2) to (3);
		\draw [style=neutral] (0) to (2);
	\end{pgfonlayer}
\end{tikzpicture}
    \caption{$C_3^+$ graph} \label{fig:C_3}
  \end{minipage}
  \begin{minipage}[t]{0.4\textwidth}
    \centering \tikzstyle{node}=[circle,fill=White,draw=Black,scale=0.5]
\tikzstyle{neutral}=[-]

\begin{tikzpicture}
	\begin{pgfonlayer}{nodelayer}
		\node [style=node] (0) at (-1, 0.75) {};
		\node [style=node] (1) at (-2.25, -0.25) {};
		\node [style=node] (2) at (-1, -1.25) {};
		\node [style=node] (3) at (1.5, 0.25) {};
		\node [style=node] (4) at (0.5, 1) {};
		\node [style=node] (5) at (1.5, -0.75) {};
		\node [style=node] (6) at (0.5, -1.5) {};
	\end{pgfonlayer}
	\begin{pgfonlayer}{edgelayer}
		\draw [style=neutral] (1) to (0);
		\draw [style=neutral] (0) to (4);
		\draw [style=neutral] (4) to (3);
		\draw [style=neutral] (3) to (5);
		\draw [style=neutral] (5) to (6);
		\draw [style=neutral] (6) to (2);
		\draw [style=neutral] (2) to (1);
		\draw [style=neutral] (0) to (2);
	\end{pgfonlayer}
\end{tikzpicture}
    \caption{$C_6^+$ graph} \label{fig:C_6}
  \end{minipage}
  \end{figure}

\bibliographystyle{acm}
\bibliography{H_game}

\end{document}